\newtheorem{remark}{Remark}
\newtheorem{proposition}{Proposition}
\newtheorem{lemma}[proposition]{Lemma}
\newtheorem{fact}{Fact}
\newtheorem*{claim*}{Claim}
\newtheorem{definition}[proposition]{Definition}
\def\eg{{\em e.g.\ }}
\newfont\bbf{msbm10 at 12pt}
\def\eps{\varepsilon}
\def\R{{\mathbb R}}
\def\N{{\mathbb N}}
\def\Z{{\mathbb Z}}
\def\le{\leqslant}
\def\ge{\geqslant}
\def\1{ {\hbox{{\it 1}} \!\! I} }
\newdimen\AAdi%
\newbox\AAbo%
\def\AArm{\fam0 }
\def\AAk#1#2{\setbox\AAbo=\hbox{#2}\AAdi=\wd\AAbo\kern#1\AAdi{}}%
\def\AAr#1#2#3{\setbox\AAbo=\hbox{#2}\AAdi=\ht\AAbo\raise#1\AAdi\hbox{#3}}%
\def\BBone{{\AArm 1\AAk{-.8}{I}I}}%
\newcommand {\CA}{{\mathcal A}}
\newcommand {\CC}{{\mathcal C}}
\newcommand {\CL}{{\mathcal L}}
\newcommand {\CP}{{\mathcal P}}
\newcommand {\CV}{{\mathcal V}}
\newcommand{\disp}{\displaystyle}
\newcommand{\8}{\infty}
\def\m1{{-1}}
\newcommand{\ninf}{{n\rightarrow\8}}
\def\S{\Sigma}
\def\s{\sigma}
\newcommand{\wh}{\widehat}
\newcommand{\wt}{\widetilde}
\def\be{\beta}
\def\al{\alpha}\def\ga{\gamma}
\def\de{\delta}
\begin{document}
\synctex=1
\floatplacement{figure}{H}
\title[non-flat  Phase Transitions]
{Chaos : butterflies also generate phase transitions and parallel universes}
\author{Renaud Leplaideur}
\date{Version of \today}
\thanks{Part of this research was supported by PUC-Santiago
}

\subjclass[2010]{37D35, 37A60, 82C26}

\keywords{Thermodynamic formalism, equilibrium state, phase transitions}

\begin{abstract}
We exhibit examples of mixing subshifts of finite type and potentials such that there are phase transitions but the pressure is always strictly convex. More surprisingly, we show that  the pressure can be analytic on some interval although there exist several equilibrium states. 
\end{abstract}

\maketitle

\section{Introduction}\label{sec:intro}

\subsection{General background}
In this paper we deal with the notion of phase transition. There are several viewpoints for studying phase transitions: Statistical Mechanics, Probability Theory or Dynamical Systems. Depending on the viewpoint, the settings, the questions and the interests are different. In Statistical Mechanics and in Probability,  one usually considers lattices with interaction energy between the sites. Often, the geometry of the lattice and the decay of correlation of interactions are the main issues. 
On the other hand, in Dynamical Systems, one usually consider  one-dimensional lattice (with natural $\Z$-actions) and the potential as a  function of which regularity is the main issue. 

\medskip
The definition of phase transition also naturally depends on the viewpoint. 
The Ising model (see \eg \cite{georgii}) and its extension, the so-called Pots model are studied in Probability Theory, either in mean fields  case (see \cite{Ellis1, Ellis-book, Bovier}), or via the percolation theory (see \cite{grimmett, cerf}). In both cases, a phase transition means co-existence of several probability measures resolving  or resulting from some optimization.  \\
In very applied Physics, a phase transition ``simply''  is,  for instance, the boiling water. In more theoretical physics, it is related to  the G\^{a}teaux differentiability  of the pressure function (see \eg \cite{daniels-vanenter2, daniels-vanenter1,phelps,ruelle}). 
\\
The topic is actually relatively new in Dynamical Systems (compared to Statistical mechanics and Probability Theory), and one usually considers that a phase transition occurs when the pressure function stops to be analytic (see \cite{makarov-smirnov, dobbs,iommi-todd}). This definition was also used in Statistical Mechanics via the Ehrenfest classification, and the lack of analyticity defines notion of {\em first-order}, {\em second over} or even higher order phase transitions.

We want to emphasize here that, both definitions (regularity of the pressure v.s. co-existence of several equilibria) are not necessarily well-related. Of course, a first-order phase transition, that is when the pressure function is not $\CC^{1}$, yields co-existence of  several equilibria.  But we show here that the converse is not true. Actually, our Theorem B states that  there are mixing systems such that the pressure function is analytic on some interval despite simultaneous existence of  several equilibria.

\medskip
One motivation in Dynamical System for studying phase transition is ergodic optimization. Since the 00's, mathematicians somehow rediscovered the notion of {\em ground states} well known in Statistical Mechanics: if $T:X\to X$ is the dynamical system and $\varphi:X\to \R$ the potential, one studies what happens to the/some equilibrium $\mu_{\be}$ for $\be.\varphi$  as $\be$ goes to $+\8$. in Statistical Mechanics $\be$ is the inverse of the temperature. Most of the phase transitions studied in Dynamical Systems are actually ``freezing'' phase transitions, that is that for $\be>\be_{c}$ the pressure function is affine. \\
Such transitions are known in Statistical Mechanics as the  Fisher-Felderhof models (see \eg \cite{fisher}, and also \cite{dyson} for a one-dimensional lattice case). Physically, this means that for some positive temperature, the systems reaches its/one ground state and then stops to change. \\
It was thus natural to inquire about the possibility to get phase transitions in Dynamical Systems which are not freezing, that is, that after the transition the pressure is non-flat. This is the purpose of our Theorem A.


\subsection{General settings}

We consider a subshift  of finite type $\S$  on a finite alphabet $\CA$. Several cases will be considered depending on the theorem.

%
%
%

We remind that a point $x$ in $\S$ is a sequence $x_{0},x_{1},\ldots$ (also called an infinite word) where $x_{i}$ are letters in $\CA$. Moreover, there is an incidence matrix which makes precise what the authorized transition $x_{i}\to x_{i+1}$ are.

The distance between two points $x=x_{0},x_{1},\ldots$ and $y=y_{0},y_{1},\ldots$ is given by 
$$d(x,y)=\frac1{2^{\min\{n,\ x_{n}\neq y_{n}\}}}.$$
A finite string of symbols $x_{0}\ldots x_{n-1}$ is also called a \emph{word}, of length $n$. For a word $w$, its length is $|w|$. A \emph{cylinder} (of length $n$) is denoted by $[x_{0}\ldots x_{n-1}]$. It is the set of points $y$ such that $y_{i}=x_{i}$ for $i=0,\ldots n-1$. 

If $i$ is a digit in $\CA$, $x=i^{n}*$ means that $\disp x=\underbrace{i\ldots i}_{n\text{ digits}}j$ with $j$ any digit $\neq i$ such that $ij$ is admissible in $\S$.

The alphabet will depend on some integer parameter $L$. It will be either 
$$\{1,2,3,4,1_{1},1_{2},\ldots, 1_{L}\}\text{ or }\{1,2,3,4, 3',4',1_{1},1_{2},\ldots, 1_{L}\},$$
and $L$ may be equal to 1.

Let consider positive real numbers $\alpha$, $\gamma$, $\delta$ and $\eps$. The potential $\phi$ is defined by 
$$\phi(x)=\begin{cases}
-\alpha<0\text{ if }x_{0}\in\{1,1_{1},1_{2},\ldots, 1_{p}\}\\
-\log\left(\frac{n+1}n\right)\text{ if } n+1=\min\{j\ge 1,\ x_{j}\neq 2\}\\
\gamma -\eps\log\left(\frac{n+1}n\right)\text{ if } x_{0}=3, 3' \text{ and } \text{ and } n+1=\min\{j\ge 1,\ x_{j}=2\},\\
\gamma+\delta -\eps\log\left(\frac{n+1}n\right)\text{ if } x_{0}=4,4' \text{ and } \text{ and } n+1=\min\{j\ge 1,\ x_{j}=2\}.
\end{cases}$$
Hence, if $x=2^{n}*$, $\phi(x)=-\log\left(\frac{n+1}{n}\right)$. If $x=3\disp\underbrace{x_{1}\ldots x_{n-2}}_{n-2\text{ digits} =3,4}32\ldots$, then $\phi(x)=\disp \gamma-\eps\log\left(\frac{n+1}{n}\right)$. 


We recall that for $\be\ge 0$, the pressure function $\CP(\be)$ is defined by 
$$\CP(\be)=\max_{\mu\ T-inv}\left\{h_{\mu}+\be\int\phi\,d\mu\right\},$$
where $h_{\mu}$ is the Kolmogorov entropy of the measure $\mu$. We refer the reader to \cite{bowen} for classical results on thermodynamic formalism in the shift. A measure which realizes the maximum in the above equality is called an \emph{equilibrium state} for $\be.\phi$. 

\begin{definition}
\label{def-phasetransi}
We say that $\CP(\be)$ (or equivalently that the potential $\phi$) has a phase transition (at $\be_{c}$) if $\be\mapsto\CP(\be)$ is not analytic at $\be=\be_{c}$. 
\end{definition}

\subsection{Results}
We first consider the case $\disp \CA=\{1,2,3,4,1_{1},1_{2},\ldots, 1_{L}\}$. The transitions are given by Graph \ref{Fig-transibutterfly}. This gives a ``butterfly'' with two wings, each one tending to be autonomous.

\begin{figure}[htbp]
\begin{center}
\includegraphics[scale=0.5]{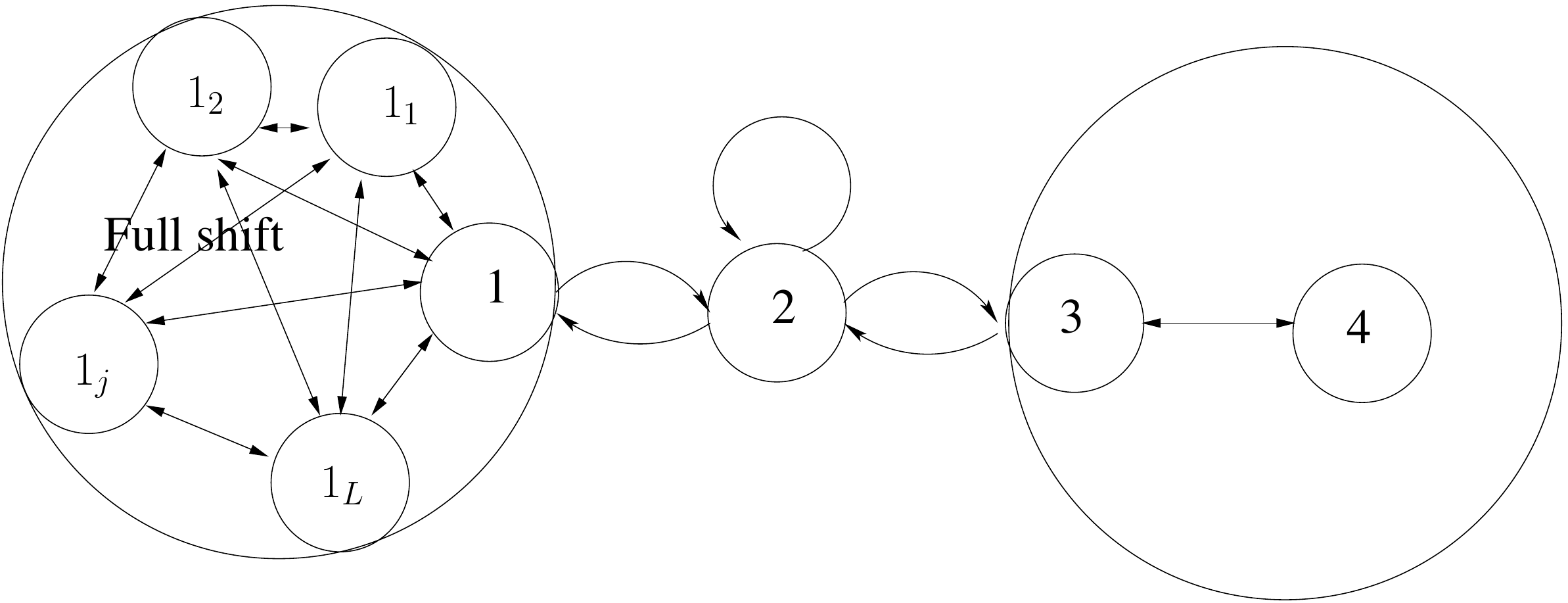}
\caption{Dynamics for Theorem A.}
\label{Fig-transibutterfly}
\end{center}
\end{figure}

We emphasize that the system is irreducible but has several subsystems. In particular we shall consider two different subsystems, $\S_{34}:=\{3,4\}^{\N}\subset\S$ and $\S_{234}:=\{2,3,4\}^{\N}\cap\S$,  the restriction of $\S$ to the invariant set of infinite words only with letters 2, 3 or 4. For the same potential $\phi$, we shall consider the associated pressure functions, $\CP_{34}(\be)$ and $\CP_{234}(\be)$.

\medskip\noindent
{\bf Theorem A.}\\{\it There  exist two positive real numbers $\be_{1}< \be_{c}$ such that $\CP(\be)$ and $\CP_{234}(\be)$ have phase transition respectively at $\be_{c}$ and $\be_{1}$. More precisely (see Figure \ref{fig-curves} p.\pageref{fig-curves}), 
\begin{enumerate}
\item the pressure function $\CP_{34}(\be)$ is analytic,
\item for $\be<\be_{c}$, the pressure function $\CP(\be)$ is analytic  and satisfies $\CP(\be)>\CP_{34}(\be)$,
\item for $\be\ge \be_{c}$, the pressure function $\CP(\be)$ satisfies $\CP(\be)=\CP_{34}(\be)$,
\item for $\be<\be_{1}$, the pressure function $\CP_{234}(\be)$ is analytic  and satisfies $\CP_{234}(\be)>\CP_{34}(\be)$,
\item for $\be\ge \be_{1}$, the pressure function $\CP_{234}(\be)$ satisfies $\CP_{234}(\be)=\CP_{34}(\be)$.
\end{enumerate}
Moreover, for each transition, the entropy is positive.

For $\be=\be_{1}$, there is an unique equilibrium state for $\S_{234}$. For $\be=\be_{c}$, there are two equilibrium states for $\S$ if and only if $\eps\be_{c}>2$. This inequality can be realized (depending of the parameters $\al,\eps,\ga$ and $\de$) or non-realized. 
}

\medskip
\noindent
As we said above, the principal motivation for Theorem A was to built phase transitions with non-flat pressure function after the transition. Actually such an example was already known in \cite{diaz-gelfert-rams}. However, in that case, the map is a skew product over a Horseshoe and the potential the logarithm of the derivative in the central direction (equivalent to the direction of fibers). Then, when it is projected onto the Horseshoe, the potential is not a function anymore. Hence, that phase transition cannot be realized as a continuous potential defined on a subshift of finite type.

To be complete concerning the shape of the pressure, we also have to mention \cite{iommi-todd}. There,  they also show that the pressure may be non-flat after the transition. However, and this is the main difference, in their construction they always need some interval where the pressure  is flat. Following their vocabulary, for this interval the systems is transient, which means it has no conformal measure. In our example, at any $\be$ there exists a conformal measure and the pressure is always strictly convex.

\medskip
Let us now present the main result of the paper. As we said above, the regularity of the pressure function is an indicator of the uniqueness of the equilibrium state. This regularity can be understood in two different ways. \\
First, regularity in the direction of the potential, that is that we study $\be\mapsto \CP(\be)$. Conversely, non-regularity yields different order of phase transitions as seen above. \\
Another point is the G\^{a}teaux-differentiability  of the functional $\CP(\phi+.)$ on $\CC(\S)$. In \cite[Cor. 2]{walters-gateaux}, it is showed that G\^{a}teaux-differentiability  at $\phi$ is equivalent to uniqueness of the equilibrium state for $\phi$. Of course this later point is more subtile that the regularity for $\CP(\be)$, but it was however communally expected that analyticity for $\CP(\be)$ would insure uniqueness of the equilibrium state. We prove here that it is not true.

\medskip\noindent
{\bf Theorem B.}\\
{\it There exist irreducible subshifts of finite type and continuous potentials such that their pressure function are analytic on some interval $]\be_{c}',+\8[$ but there coexist several equilibrium states.

There exists an infinite-dimensional space of functions $\varphi$ such that for every $\be>\be_{c}'$, $\varphi\mapsto \CP(\phi+\varphi)$ is G\^{a}teaux-differentiable in the $\varphi$-direction. 
}

We remind that a function $f$ is G\^{a}teaux-differentiable at $x$ in the direction $y$ if 
$$\lim_{t\to 0}\frac{f(x+ty)-f(x)}{t}$$
exists.

%
%
%
%
%

\subsection{Overview of the paper}
The main tool is the notion of local equilibrium state as it was introduced in \cite{leplaideur1} and developed in further works of the author. We briefly recall in Appendix \ref{sec-locthermo} the principal steps. 

In Section \ref{sec-transiglobale} we prove main of the results concerning Theorem A and the phase transition for $\CP(\be)$. In Section \ref{sec-transicp234} we prove the ``small'' phase transition for $\S_{234}$ and finish the proof of Theorem A. 

In these both sections the main issue is to define and understand Figure \ref{fig-curves} (p. \pageref{fig-curves}). The special curves $\wt Z_{c}(\be)$, $\lambda_{Z,\be,[1]}$ and $\lambda_{Z,\be,[32]}$ appear on that figure. Most of the proofs of lemmas are easily under stable if one keep Figure \ref{fig-curves}  in mind. 

In Section \ref{sec-theoB} we prove Theorem B. We explain how to adapt the results from the proof of Theorem A to the new situation. 


\section{The phase transition for $\CP(\be)$}\label{sec-transiglobale}

\subsection{The induced operator on $[1]$}
The first return time map has been widely used in dynamical systems to understand the ergodic properties of a system. In this direction, we introduced in \cite{leplaideur1}a generalisation of the classical transfer operator which allow us to obtain information in cases that the usual theory breaks down. Denote by  $\tau_{[1]}(x)$  the first return time into the cylinder $[1]$. Given $Z \in  \R$ the \emph{Induced Transfer Operator on the cylinder $[1]$} for the first return time map and for the potential
$$\beta\phi(x)+\ldots +\beta\phi\circ \s^{\tau_{[1]}(x)}(x)-\tau_{[1]}(x)Z,$$
is defined by
$$\CL_{Z,\be,[1]}(\psi)(x):=\sum_{y: \, g(y)=x}e^{S_{\tau(y)}(\be.\phi)(y)-Z \tau_{[1]}(y)}\psi(y).$$
A counting argument yields
\begin{eqnarray*}
\CL_{Z,\be,[1]}(\BBone_{[1]})(x)&=&\sum_{n=1}^{+\8}e^{-n\beta\alpha-nZ+(n-1)\log L}+
\sum_{n=1}^{+\8}\left(\frac1{n+1}\right)^{\beta}e^{-nZ}\cdot e^{-\alpha\beta-Z}\times\\
&&
\left[\sum_{k=0}^{+\8}\left(
\sum_{n=1}^{+\8}\left(\frac1{n+1}\right)^{\beta}e^{-nZ}\,\cdot\sum_{n=1}^{+\8}\left(\frac1{n+1}\right)^{\eps\beta}L_{n}(\beta,[3])e^{-nZ}\right)^{k}\right],
\end{eqnarray*}
where
\begin{equation}
\label{equ-ope3}
L_{n}(\beta,[3])=\sum_{w\in\{3,4\}^{n},w_{0}=w_{n-1}=3}e^{\beta\varphi(w)}.
\end{equation}
Indeed, the first summand corresponds to the points  $x=(1x_2x_3 \dots x_{n-1} \dots)$ such that for every $ i \in \{2,3, \dots, n-1 \}$ we have $x_i \in \{1_{1},\ldots, 1_{L}\}$.  For those points we have that $\beta\phi(x)+\ldots +\beta\phi\circ \s^{\tau^{1}(x)}(x)-\tau_{[1]}(x)Z= -n\beta \alpha -nZ$. Moreover, since the system restricted to $\{1,1_{1},\ldots, 1_{L}\}$ is a full-shift on $L$ symbols there are exactly $L^{n-1}$ different words of the form just described. Hence the factor $(n-1)\log L$.

The second summand considers the points of the form $x=(1 2, 2,2,  \dots 2 1 \dots)$. In this case $e^{-\beta \alpha -Z }$ takes into account the potential at $x$ and the contribution for the strain go 2's yields the term $\disp\left(\frac1{n+1}\right)^{\be}$.

Finally, we have to consider the points in $[1]$ that before returning to the cylinder $[1]$ visit the cylinders $[3],[4]$.  They must first visit $[2$. These words are of the form
$$1(\text{string of 2's})\left(\text{intermittence of strings of 3's or 4's and strings of 2's}\right)1\ldots$$

Note that,

\begin{eqnarray*}
L_{n}(\be,[3])&=& e^{n\be\gamma}\sum_{k=0}^{n-2}e^{k\be\delta}{n-2\choose k}
 \times \text{possible $4$'s in the word $w$}\\
&=& e^{n\be\ga}(1+e^{\be\delta})^{n-2}=\frac{\left(e^{\ga\be}+e^{(\ga+\delta)\be}\right)^{n}}{(1+e^{\delta\be})^{2}}= \frac1{(1+e^{\de\be})^{2}}e^{n\CP_{34}(\be)}.
\end{eqnarray*}

%
%

Following Section \ref{sec-locthermo} we need to determine the curve $Z_{c}(\be)$ associated to $\CL_{{Z,\be,[1]}}$. 
For simplicity we set 
\begin{eqnarray*}
\S_{1}=\S_{1}(Z,\be)&:=&\sum_{n=1}^{+\8}e^{-n\beta\alpha-nZ+(n-1)\log L}\\
\S_{2}=\S_{2}(Z,\be)&:=& \sum_{n=1}^{+\8}\left(\frac1{n+1}\right)^{\beta}e^{-nZ}\\
\S_{3}=\S_{3}(Z,\be)&:=&\frac1{(1+e^{\de\be})^{2}}\sum_{n=1}^{+\8}\left(\frac1{n+1}\right)^{\eps\beta}e^{n(\CP_{34}(\be)-Z)}.
\end{eqnarray*}

With these notations we get  for every $x$ in $[1]$
\begin{eqnarray*}
\CL_{{Z,\be,[1]}}(\BBone_{[1]})(x)&=& \S_{1}+\S_{2}e^{-\al\be-Z}\sum_{k=0}^{+\8}(\S_{2}\S_{3})^{k}\\
&=& \S_{1}+\frac{\S_{2}e^{-\al\be-Z}}{1-\S_{2}\S_{3}},
\end{eqnarray*}
which makes sense only if $\S_{2}\S_{3}<1$.

\subsection{Critical value $Z_{c}(\be)$ for $\CL_{{Z,\be,[1]}}$}

The quantity $\CL_{{Z,\be,[1]}}(\BBone_{[1]})$ is well-defined if and only if 
\begin{subeqnarray}
 \S_{1}<+\8,\slabel{equ-condiS1}\\
 \S_{2}\S_{3}<1.\slabel{equ-condiS2S3}
\end{subeqnarray}

Condition \eqref{equ-condiS1} is satisfied only if $Z>\log L-\al\be$. Let us now study Condition \eqref{equ-condiS2S3}.  
First, we emphasize that a necessary condition is $Z\ge \CP_{34}(\be)$. Now we have:
\begin{lemma}
\label{lem-condiS2S3}
For every values of the parameters, there exists a unique $\be_{1}$ which is positive such that for every $0\le \be<\be_{1}$ there exists an unique $\wt Z_{c}(\be)$ satisfying
\begin{enumerate}
\item $\wt Z_{c}(\be)> \CP_{34}(\be)$,\\
\item $\S_{2}\S_{3}<1$ for every $Z>\wt Z_{c}(\be)$,\\
\item $\S_{2}\S_{3}>1$ for every $\CP_{34}(\be)\le Z<\wt Z_{c}(\be)$,\\
\item $\S_{2}\S_{3}=1$ for every $Z=\wt Z_{c}(\be)$.
\end{enumerate}
\end{lemma}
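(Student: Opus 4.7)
The plan is to reduce the lemma to a one-variable monotonicity argument by fixing $\beta$ and first studying $Z \mapsto \S_2(Z,\be)\S_3(Z,\be)$, then studying its boundary value at $Z=\CP_{34}(\be)$ as a function of $\be$.

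First, I would fix $\be\ge 0$ and consider $\Phi_\be(Z):=\S_2(Z,\be)\S_3(Z,\be)$ on the interval $(\CP_{34}(\be),+\8)$. For $Z>\CP_{34}(\be)$ both defining series converge (the one for $\S_3$ by the geometric factor $e^{n(\CP_{34}(\be)-Z)}$, the one for $\S_2$ because $\CP_{34}(\be)=\ga\be+\log(1+e^{\de\be})>0$). Each summand is strictly positive, $C^\8$ in $Z$, and strictly decreasing in $Z$; a Weierstrass M-test on compact subintervals lets one differentiate term-by-term, so $\Phi_\be$ is continuous and strictly decreasing. One then computes the two boundary limits: $\lim_{Z\to+\8}\Phi_\be(Z)=0$ (each term decays exponentially), while as $Z\to\CP_{34}(\be)^+$ the series $\S_2$ tends to its finite boundary value and $\S_3$ tends to $(1+e^{\de\be})^{-2}\sum_n(n+1)^{-\eps\be}$, which is finite iff $\eps\be>1$.

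Define
\[
F(\be):=\lim_{Z\to\CP_{34}(\be)^+}\Phi_\be(Z)\in (0,+\8].
\]
By monotonicity and continuity of $\Phi_\be$, the equation $\Phi_\be(Z)=1$ has a unique solution $\wt Z_c(\be)$ strictly greater than $\CP_{34}(\be)$ iff $F(\be)>1$, and this solution automatically satisfies properties (2),(3),(4) of the statement. Thus the lemma reduces to proving the existence of a unique $\be_1>0$ separating the regions $\{F>1\}$ and $\{F\le 1\}$.

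For this, I would study $F$ on $(1/\eps,+\8)$ (the set where $F$ is finite; on $[0,1/\eps]$ one has $F=+\8>1$ trivially). Here $F$ is a product of three manifestly positive factors,
\[
F(\be)=\frac{1}{(1+e^{\de\be})^{2}}\cdot\Bigl(\sum_{n\ge 1}(n+1)^{-\be}e^{-n\CP_{34}(\be)}\Bigr)\cdot\Bigl(\sum_{n\ge 1}(n+1)^{-\eps\be}\Bigr),
\]
and each factor is strictly decreasing in $\be$: the first because $e^{\de\be}$ increases; the third termwise; the second because both $(n+1)^{-\be}$ decreases in $\be$ and $\CP_{34}(\be)$ increases in $\be$ (its derivative $\ga+\de e^{\de\be}/(1+e^{\de\be})$ is positive). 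Hence $F$ is continuous and strictly decreasing on $(1/\eps,+\8)$, with $\lim_{\be\to(1/\eps)^+}F(\be)=+\8$ (the $\zeta$-type series diverges) and $\lim_{\be\to+\8}F(\be)=0$ (the prefactor $(1+e^{\de\be})^{-2}$ kills the bounded remainder). The intermediate value theorem then produces a unique $\be_1\in(1/\eps,+\8)$ with $F(\be_1)=1$, and by Step 1 this gives a unique $\wt Z_c(\be)$ with the four required properties for every $0\le\be<\be_1$.

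The only mildly delicate step is the strict monotonicity of $F$, and it becomes routine once one writes $F$ as the above product and uses that $\CP_{34}(\be)$ is explicitly increasing; everything else is continuity, the IVT, and term-by-term control of convergent series.
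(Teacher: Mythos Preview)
Your proof is correct and follows essentially the same approach as the paper's: fix $\be$, use strict monotonicity of $Z\mapsto\S_2\S_3$ together with its boundary values to reduce existence of $\wt Z_c(\be)$ to the condition $F(\be)>1$, then show $F$ is strictly decreasing in $\be$ with the right limits and apply the intermediate value theorem. Your version is somewhat more explicit than the paper's in two respects---you split off the regime $\eps\be\le 1$ where $F(\be)=+\8$ trivially, and you justify the monotonicity of $F$ by factoring it into three strictly decreasing pieces rather than appealing only to the monotonicity of $\CP_{34}$---but these are elaborations of the same argument, not a different route.
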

\begin{proof}
Note that $\S_{2}$ and $\S_{3}$ decreases in $Z$ for fixed $\be$. For a fixed $\be$, the unicity of $\wt Z_{c}(\be)$  such that 
$$\S_{2}(Z,\be)\S_{3}(Z,\be)=1$$
is thus proved (if it exists !). 
Note that 
both $\S_{2}$ and $\S_{3}$ go to 0 if $Z$ goes to $+\8$. Then, existence of $\wt Z_{c}(\be)$ follows from  the value of $\S_{2}(0,\be)\S_{3}(0,\be)$. If it is larger than 1 then $\wt Z_{c}(\be)$ exits, if it is smaller than 1, then $\wt Z_{c}(\be)$ does not exist.
Now,
$$\S_{2}(\CP_{34}(\be),\be)\S_{3}(\CP_{34}(\be),\be)=\sum_{n=1}^{+\8}\left(\frac1{n+1}\right)^{\be}e^{-n\CP_{34}(\be)}\frac{\zeta(\eps\be)-1}{(1+e^{\de\be})^{2}}.$$
 The function $\be\mapsto\CP_{34}(\be)$ increases in $\be$, thus $\be\mapsto\S_{2}(\CP_{34}(\be),\be)\S_{3}(\CP_{34}(\be),\be)$ decreases in  $\be$. It goes to $+\8$ if $\be$ goes to 0 and to $0$ if $\be$ goes to $+\8$.

 Therefore, there exists a unique $\be_{1}$ such that 
 \begin{equation}
\label{equ-beta1}
\S_{2}(\CP_{34}(\be_1),\be_1)\S_{3}(\CP_{34}(\be_1),\be_1)=\sum_{n=1}^{+\8}\left(\frac1{n+1}\right)^{\be_1}e^{-n\CP_{34}(\be_1)}\frac{\zeta(\eps\be_1)-1}{(1+e^{\de\be_1})^{2}}=1
\end{equation}
\end{proof}
\begin{remark}
\label{rem-epsbe}
We point out that $\eps\be_{1}>1$ because the Zeta function does converge. 
$\blacksquare$\end{remark}

Note that for every $\be<\be_{1}$, $\wt Z_{c}(\be)>\CP_{34}(\be)$ and is given by the following implicit formula
$$\sum_{n=1}^{+\8}\left(\frac1{n+1}\right)^{\be}e^{-n\wt Z_{c}(\be)}\frac{1}{(1+e^{\de\be})^{2}}\sum_{n=1}^{+\8}\left(\frac1{n+1}\right)^{\eps\be}e^{-n(\wt Z_{c}(\be)-\CP_{34}(\be))}=1.$$ 
It shows $\wt Z_{c}(\be)$ is analytic in $\be$ for $\be<\be_{1}$. 

Another consequence of Lemma \ref{lem-condiS2S3} is that for $\be>\be_{1}$, Condition \eqref{equ-condiS2S3} holds for every $Z\ge \CP_{34}(\be)$, the case $\be=\be_{1}$ being the critical one\footnote{Actually, for $\be=\be_{1}$, \eqref{equ-condiS2S3} holds for every $Z> \CP_{34}(\be_{1})$. }.

\medskip
Consequently, Conditions \eqref{equ-condiS1} and \eqref{equ-condiS2S3} hold if and only if
\begin{enumerate}
\item for $\be\le \be_{1}$, $Z>\log L-\al\be$ and $Z>\wt Z(\be)$,
\item for every $\be>\be_{1}$, $Z>\log L-\al\be$ and $Z\ge \CP_{34}(\be)$.
\end{enumerate}

\paragraph{\bf Hypotheses.} 
In order to simplify the determination of $Z_{c}(\be)$, we make the assumption $L=1$. This assumption does not affect the results neither the proofs. We shall see below that $\wt Z_{c}$ is increasing so as $\CP_{34}$ and $\be\mapsto \log L-\al\be$ is decreasing. Therefore, there eventually exists some $\be'$ such that for every $\be>\be'$, $Z_{c}(\be)=\CP_{34}(\be)$ (see Figure \ref{fig-curves}). 
Actually, the assumption allows us to consider $\be'=0$, and then just to consider relative positions with respect to $\CP_{234}(\be)$ (which is equal $\wt Z_{c}(\be)$ for $\be<\be_{1}$) and $\CP_{34}(\be)$. 

\medskip
With this assumption, $Z_{c}(\be)=\wt Z_{c}(\be)$ if $\be\le\be_{1}$ and $\CL_{{Z,\be,[1]}}(\BBone_{[1]})$ diverges  for $Z=Z_{c}(\be)$; $Z_{c}(\be)=\CP_{34}(\be)$ if $\be>\be_{1}$ and $\CL_{{Z,\be,[1]}}(\BBone_{[1]})$ converges for $Z=Z_{c}(\be)$.

\subsection{Spectral radius of $\CL_{{Z,\be,[1]}}$}
Due to the form of the potential, the spectral radius of $\CL_{{Z,\be,[1]}}$ is given by 
\begin{equation}
\label{equ-lambdatoto1}
\lambda_{{Z,\be,[1]}}=\CL_{{Z,\be,[1]}}(\BBone_{[1]})(x)=\S_{1}+\frac{\S_{2}e^{-\al\be-Z}}{1-\S_{2}\S_{3}},
\end{equation}
for any $x$ in $[1]$. 

We are interested by the level curve $\lambda_{{Z,\be,[1]}}=1$ because it partially gives the implicit function $Z=\CP(\be)$. 

\begin{lemma}
\label{lem-lambdatoto1=1}
There exists $\be_{c}>\be_{1}$ such that for every $\be<\be_{c}$, there exists a unique $Z>Z_{c}(\be)$ such that $\lambda_{{Z,\be,[1]}}=1$. 

Moreover, for every $\be>\be_{c}$ and for every $Z\ge Z_{c}(\be)=\CP_{34}(\be)$, $\lambda_{{Z,\be,[1]}}<1$. 
\end{lemma}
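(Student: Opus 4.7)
The plan is to reduce the statement to two successive applications of the intermediate value theorem, exploiting strict monotonicity of $\lambda_{Z,\be,[1]}$ first in $Z$ (for fixed $\be$) and then in $\be$ along the critical curve $Z=Z_c(\be)$.

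First I would show that, for fixed $\be$, the map $Z\mapsto\lambda_{Z,\be,[1]}$ is strictly decreasing on $(Z_c(\be),+\infty)$ and tends to $0$ as $Z\to+\infty$. Inspecting formula \eqref{equ-lambdatoto1}, each of $\S_1$, $\S_2$, $\S_3$ and the factor $e^{-\al\be-Z}$ is strictly decreasing in $Z$: hence $\S_1$ decreases, the numerator $\S_2 e^{-\al\be-Z}$ of the second summand decreases, and the denominator $1-\S_2\S_3$ increases. For $\be\le\be_1$, Lemma \ref{lem-condiS2S3} gives $\S_2\S_3\to 1$ as $Z\to Z_c(\be)^+$, so $\lambda_{Z,\be,[1]}\to+\infty$, and the IVT yields a unique $Z>Z_c(\be)$ with $\lambda_{Z,\be,[1]}=1$. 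For $\be>\be_1$ the operator is finite at $Z=Z_c(\be)=\CP_{34}(\be)$, and I set $F(\be):=\lambda_{\CP_{34}(\be),\be,[1]}$, so that the remaining problem reduces to understanding the function $F$ on $(\be_1,+\infty)$.

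I would next prove that $F$ is strictly decreasing on $(\be_1,+\infty)$, with $F(\be_1^+)=+\infty$ and $F(\be)\to 0$ as $\be\to+\infty$. The blow-up follows from $\S_2\S_3\to 1$ at $(\CP_{34}(\be_1),\be_1)$ via Lemma \ref{lem-condiS2S3}. For monotonicity, since $\CP_{34}(\be)=\log(e^{\ga\be}+e^{(\ga+\de)\be})$ is strictly increasing in $\be$, each individual term in $\S_2(\CP_{34}(\be),\be)$ and $\S_3(\CP_{34}(\be),\be)$ strictly decreases in $\be$, as do the prefactors $(1+e^{\de\be})^{-2}$ and $\zeta(\eps\be)-1$; the factor $e^{-\al\be-\CP_{34}(\be)}$ also decreases, and Lemma \ref{lem-condiS2S3} already asserts that $\S_2\S_3$ at the critical level strictly decreases from $1$ to $0$ on $[\be_1,+\infty)$, so $1-\S_2\S_3$ strictly increases. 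Combining these monotonicities termwise in the expression \eqref{equ-lambdatoto1} shows $F$ is strictly decreasing, and the $+\infty$-limit $F(\be)\to 0$ is immediate from dominated convergence and exponential decay of every summand. Applying IVT to $F$ produces a unique $\be_c>\be_1$ with $F(\be_c)=1$; for $\be_1<\be<\be_c$ one has $F(\be)>1$, and the first step then provides a unique $Z>\CP_{34}(\be)$ with $\lambda_{Z,\be,[1]}=1$, while for $\be>\be_c$ monotonicity in $Z$ gives $\lambda_{Z,\be,[1]}\le F(\be)<1$ for every $Z\ge\CP_{34}(\be)$.

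The main technical obstacle is purely organisational: it is the joint-monotonicity bookkeeping that establishes $F$ is strictly decreasing, mixing the increase of $\be\mapsto\CP_{34}(\be)$ (which pushes all the $\S_i$-sums downward), the decrease of the $(1+e^{\de\be})^{-2}$ and $\zeta(\eps\be)-1$ prefactors, and the already-established fact from Lemma \ref{lem-condiS2S3} that $\S_2\S_3$ at the critical level strictly crosses $1$ at $\be_1$. Once these monotonicities are collected, the two IVT arguments close the proof cleanly.
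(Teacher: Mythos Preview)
Your proposal is correct and follows essentially the same route as the paper's proof: first use monotonicity in $Z$ together with the blow-up at $Z_c(\be)$ for $\be\le\be_1$, then for $\be>\be_1$ define $F(\be)=\lambda_{\CP_{34}(\be),\be,[1]}$, show it is strictly decreasing with $F(\be_1^+)=+\infty$ and $F(+\infty)=0$, and apply the intermediate value theorem to locate $\be_c$. Your write-up is in fact more explicit than the paper's on the termwise monotonicity bookkeeping for $F$, which the paper simply asserts from the increase of $\CP_{34}$.
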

\begin{proof}
We emphasize that $\S_{1}$, $\S_{2}$ and $\S_{3}$ decreases in $Z$. 
Let us first consider the case $\be\le \be_{1}$. For a fixed $\be$, if $Z\downarrow \wt Z_{c}(\be)$, $\lambda_{{Z,\be,[1]}}$ increases to $+\8$ because $\S_{2}\S_{3}$ goes to 1. On the other hand, if $Z$ goes to $+\8$, $\lambda_{{Z,\be,[1]}}$ goes to 0. 

Therefore, there exists a unique $Z$ such that $\lambda_{{Z,\be,[1]}}=1$. 
 
Let us now assume $\be>\be_{1}$. Again, $\lambda_{{Z,\be,[1]}}$ goes to 0 if $\be$ goes to $+\8$. Existence of a solution for 
$$\lambda_{{Z,\be,[1]}}=1,$$
is thus a consequence of inequality 
\begin{equation}
\label{equ-lambdatoto1P34}
F(\be):=\sum_{n=1}^{+\8}e^{-n\al\be-n\CP_{34}(\be)}+\frac{\sum_{n=1}^{+\8}\left(\frac1{n+1}\right)^{\be}e^{-n\CP_{34}(\be)}e^{-\al\be-\CP_{34}(\be)}}{1-\frac1{(1+e^{\de\be})^{2}}\sum_{n=1}^{+\8}\left(\frac1{n+1}\right)^{\be}e^{-n\CP_{34}(\be)}\sum_{n=1}^{+\8}\left(\frac1{n+1}\right)^{\eps\be}}\ge 1.
\end{equation}
As $\be\mapsto \CP_{34}(\be)$ increases, we claim that $F(\be)$ decreases in $\be$. If $\be\downarrow \be_{1}$, $\S_{2}\S_{3}$ goes to 1 and then $F(\be)$ goes to $+\8$. If $\be\to+\8$, $F(\be)\to0$. 

Consequently, there exists an unique $\be_{c}$, such that $F(\be_{c})=1$. We have $\be_{c}>\be_{1}$ because $\lim_{\be\downarrow \be_{1}}F(\be)=+\8$. 

For $\be<\be_{c}$, the implicit equation $\lambda_{{Z,\be,[1]}}=1$ has an unique solution, $Z=\CP(\be)$. For $\be>\be_{c}$ it has no solution. 
 \end{proof}

\subsection{Thermodynamic formalism for $\be<\be_{c}$}
If $\be<\be_{c}$ we simply use Fact \ref{fact-spectrad=1equil} in Appendix \ref{sec-locthermo}. As $\CP(\be)$ is given by an implicit function inside the interior of the domain of analyticity in both variables, $\be\mapsto\CP(\be)$  is analytic for $\be<\be_{c}$. 

By construction, $\CP(\be)>Z_{c}(\be)\ge \CP_{34}(\be)$. 

\medskip
We can now study the influence of the assumption $L=1$. 
If $L$ is not supposed to be equal to 1 then, $Z_{c}(\be)$ is, by definition larger than $\log L-\al\be$. For $Z\downarrow Z_{c}(\be)$, if $\log L-\al\be>\wt Z_{c}(\be)$ or/and $\log L-\al\be>\CP_{34}(\be)$, then $\lambda_{{Z,\be,[1]}}$ goes to $+\8$ and the same reasoning than above works. 

On  the other hand, $\log L-\al\be$ decreases in $\be$ to $-\8$, whereas $\CP_{34}(\be)$ increases in $\be$ to $+\8$. Therefore, there eventually exists some $\be'$ such that for every $\be>\be'$ $Z_{c}(\be)=\wt Z_{c}(\be)$ (if $\be<\be_{1}$) or $Z_{c}(\be)=\CP_{34}(\be)$ (if $\be>\be_{1}$). Then, the reasoning is the same. 

Hence, the influence of $L$ only shift the phase transition to higher $\be_{c}$. As a by-product, this shows that $\eps\be_{c}$ can be made as big as wanted if $L$ increases.

\begin{lemma}
\label{lem-pgoeszcbetac}
$\disp\lim_{\be\to\be_{c}^{-}}\CP(\be)=\CP_{34}(\be_{c})$.
\end{lemma}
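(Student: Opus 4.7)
My plan is to exploit the implicit equation $\lambda_{Z,\beta,[1]} = 1$ that defines $\CP(\beta)$ for $\beta < \beta_c$, together with the definition of $\beta_c$ via $F(\beta_c) = \lambda_{\CP_{34}(\beta_c),\beta_c,[1]} = 1$.

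First I would note that $\beta \mapsto \CP(\beta)$ is convex on $[0,\beta_c)$ and bounded below by $\CP_{34}(\beta)$, so the one-sided limit $\ell := \lim_{\beta \to \beta_c^-} \CP(\beta)$ exists in $[\CP_{34}(\beta_c),+\infty]$. Since $\beta \in (\beta_1,\beta_c)$ implies $\CP(\beta) \le Z_c(\beta) + o(1)$ and the pressure is finite at $\beta_c$, one checks $\ell < +\infty$ (e.g.\ using the convexity bound $\CP(\beta) \le \CP(0) + \beta \sup |\phi|$, or simply because $\CP(\beta) \le \log|\CA| + \beta\sup\phi$). So $\ell$ is a finite real with $\ell \ge \CP_{34}(\beta_c)$.

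Next I would argue by contradiction: suppose $\ell > \CP_{34}(\beta_c)$. On the open set $\{(Z,\beta) : Z > \CP_{34}(\beta)\}$ the series $\S_1,\S_2,\S_3$ are term-by-term analytic in $(Z,\beta)$ and converge uniformly on compacts, so the formula
\[
\lambda_{Z,\beta,[1]} \;=\; \S_1(Z,\beta) \;+\; \frac{\S_2(Z,\beta)\,e^{-\alpha\beta - Z}}{1 - \S_2(Z,\beta)\S_3(Z,\beta)}
\]
defines a jointly continuous (indeed analytic) function in a neighbourhood of $(\ell,\beta_c)$, because at this point $\S_2\S_3 < 1$ by Lemma \ref{lem-condiS2S3} applied at $\beta_c > \beta_1$ together with $Z = \ell > \CP_{34}(\beta_c) = Z_c(\beta_c)$. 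Passing to the limit in $\lambda_{\CP(\beta),\beta,[1]} = 1$ along any sequence $\beta_n \uparrow \beta_c$ with $\CP(\beta_n) \to \ell$ yields $\lambda_{\ell,\beta_c,[1]} = 1$.

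Now I invoke strict monotonicity in $Z$: for fixed $\beta$, each of $\S_1,\S_2,\S_3$ is strictly decreasing in $Z$, so the numerator $\S_2 e^{-\alpha\beta - Z}$ decreases while the denominator $1 - \S_2\S_3$ increases, hence $Z \mapsto \lambda_{Z,\beta,[1]}$ is strictly decreasing wherever finite. But the definition of $\beta_c$ gives $\lambda_{\CP_{34}(\beta_c),\beta_c,[1]} = F(\beta_c) = 1$, and $\ell > \CP_{34}(\beta_c)$ would therefore force $\lambda_{\ell,\beta_c,[1]} < 1$, contradicting the previous paragraph. Hence $\ell = \CP_{34}(\beta_c)$, proving the lemma.

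The only non-routine step is the second one, namely checking that one really may pass to the limit inside $\lambda_{Z,\beta,[1]}$. The potential trap is that the denominator $1-\S_2\S_3$ could degenerate as $(Z,\beta) \to (\ell,\beta_c)$; but this is ruled out precisely by the strict inequality $\ell > \CP_{34}(\beta_c) = Z_c(\beta_c)$ combined with Lemma \ref{lem-condiS2S3} (where condition \eqref{equ-condiS2S3} is open in $(Z,\beta)$ on $\{Z > \CP_{34}(\beta)\}$ for $\beta > \beta_1$), so continuity is automatic and the argument goes through.
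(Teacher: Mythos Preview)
Your proof is correct and follows essentially the same route as the paper: both hinge on $F(\beta_c)=\lambda_{\CP_{34}(\beta_c),\beta_c,[1]}=1$ together with the strict monotonicity of $Z\mapsto\lambda_{Z,\beta,[1]}$. You are simply more explicit about the limit passage---arguing by contradiction via joint continuity of $\lambda_{Z,\beta,[1]}$ on $\{Z>\CP_{34}(\beta)\}$---whereas the paper directly asserts that $\CP(\beta_c)=\CP_{34}(\beta_c)$ as the unique solution of $\lambda_{Z,\beta_c,[1]}=1$.
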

\begin{proof}
Rememeber that  $\CP(\be)$ is given by the implicit formula $\lambda_{{Z,\be,[1]}}=1$. On the other hand, for every $\be_{1}<\be<\be_{c}$ and $Z=\CP_{34}(\be)$ we set
$$\lambda_{{Z,\be,[1]}}=F(\be).$$
Furthermore, $F(\be)>1$  for $\be<\be_{c}$ and goes to 1 if $\be\to \be_{c}$ (by definition of $\be_{c}$). 

As for any fixed $\be$, $Z\mapsto\lambda_{{Z,\be,[1]}}$ decreases, for $\be=\be_{c}$, $Z=\CP_{34}(\be)$ is the unique solution for 
$$\lambda_{{Z,\be,[1]}}=1, $$
thus $\CP(\be_{c})= \CP_{34}(\be_{c})$. 
\end{proof}

\subsection{Number of equilibrium states at $\be_{c}$ and thermodynamic formalism for $\be>\be_{c}$}

Note that due to Facts \ref{fact-specrad<1} and \ref{fact-casezc} in Appendix, for $\be>\be_{c}$ no equilibrium state can gives positive weight to $[1]$. At the transition, $\be=\be_{c}$, $\CP(\be_{c})=\CP_{34}(\be_{c})$ which yields that the unique equilibrium state in $\S_{34}$ for $\be.\phi$ is one equilibrium state for the global system. 
Then, existence of one equilibrium state giving positive weight to $[1]$ is related to the condition $\disp\left|\frac{\partial\CL_{{Z,\be,[1]}}(\BBone_{[1]})(x)}{\partial Z}\right|<+\8$.

\begin{lemma}
\label{lem-equizobi1}
If $\mu$ is an equilibrium state for $\be.\phi$ and $\mu([1])=0$, then $\mu([1_{1}])=\ldots=\mu([1_{L}])=0$. 
\end{lemma}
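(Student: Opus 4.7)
\textbf{Proof plan for Lemma \ref{lem-equizobi1}.} The main idea is to exploit the isolating nature of the left wing of the butterfly. By the transition structure, from any letter $1_{i}$ the only admissible successors are $1$ itself and the other $1_{j}$'s. Hence, if $x\in[1_{i}]$ and the forward orbit of $x$ never visits $[1]$, then $x$ must lie forever in the subshift
$$W:=\{1_{1},\ldots,1_{L}\}^{\N}\cap\S.$$
Shift-invariance of $\mu$ combined with $\mu([1])=0$ gives, by countable subadditivity,
$$\mu\Bigl(\bigcup_{n\ge 0}\s^{-n}[1]\Bigr)\le\sum_{n\ge 0}\mu(\s^{-n}[1])=0,$$
so $\mu$-almost every point avoids $[1]$ under every forward iterate. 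Consequently $\mu([1_{i}]\sm W)=0$ for every $i$, and in particular $\sum_{i=1}^{L}\mu([1_{i}])\le\mu(W)$.

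I would then argue by contradiction: assume some $\mu([1_{i}])>0$, so that $\mu(W)>0$. The set $W$ is shift-invariant, so in the ergodic decomposition $\mu=\int\mu_{\omega}\,dP(\omega)$ one has $\mu_{\omega}(W)\in\{0,1\}$ for $P$-a.e.~$\omega$, and a set of positive $P$-measure of components $\mu_{\omega}$ is supported on $W$. Since both $h_{(\cdot)}$ and $\int\phi\,d(\cdot)$ are affine under ergodic decomposition, every such ergodic component is itself an equilibrium state for $\be.\phi$. Fix one, call it $\nu$.

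Since $\nu$ is supported on $W$, on which $\phi\equiv -\al$ and the shift restriction is (at most) the full shift on $L$ symbols, we get $\int\phi\,d\nu=-\al$ and $h_{\nu}\le\log L$, whence
$$h_{\nu}+\be\int\phi\,d\nu\le \log L-\al\be.$$
Under the standing assumption $L=1$ this reduces to $-\al\be<0<\CP_{34}(\be)\le\CP(\be)$, which strictly contradicts the equilibrium property of $\nu$. (More generally, the convergence condition \eqref{equ-condiS1} already forces $Z_{c}(\be)>\log L-\al\be$, and $\CP(\be)\ge Z_{c}(\be)$, so the same strict inequality $\CP(\be)>\log L-\al\be$ holds.) Therefore $\mu([1_{i}])=0$ for each $i$.

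The only slightly delicate step is the passage to ergodic components while preserving the equilibrium property; this is standard but is the one place where the variational principle (and not merely invariance) is used. Once that is granted, the proof is essentially the remark that the dynamics on $W$ is trivial and is beaten by $\CP_{34}(\be)$ in the variational principle.
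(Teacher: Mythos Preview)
Your proof is correct and follows essentially the same route as the paper's: pass to an ergodic component (still an equilibrium state), use the graph structure to conclude it must be supported on $W=\{1_{1},\ldots,1_{L}\}^{\N}$, observe that $\phi\equiv-\al$ there so the free energy is at most $\log L-\al\be$, and derive a contradiction with the value of $\CP(\be)$.

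The only difference worth noting is the last comparison. The paper contradicts by comparing with the measure of maximal entropy on the full shift $\{1,1_{1},\ldots,1_{L}\}^{\N}$, which has free energy $\log(L+1)-\al\be>\log L-\al\be$; this is entirely local to the left wing and works for all $L$ and all $\be$ without any side condition. Your comparison via $\CP_{34}(\be)$ (or $Z_{c}(\be)$) is also valid, but it pulls in information from the right wing and, in the general-$L$ parenthetical, relies on the strict inequality $\CP(\be)>\log L-\al\be$, which the paper establishes only after the simplifying discussion around $Z_{c}(\be)$. The paper's comparison is slightly cleaner for that reason, but the arguments are otherwise the same.
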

\begin{proof}
Assume $\mu$ is an equilibrium state for $\be.\phi$ and $\mu([1])=0$. As we always can consider an ergodic component of $\mu$  it is equivalent to assume that $\mu$ is ergodic. 

Then, if $\mu([1_{1}]\cup\ldots\cup[1_{L}])>0$, by ergodicity, $\mu([1_{1}]\cup\ldots\cup[1_{L}])=1$. As the potential $\phi$ is constant on $[1_{1}]\cup\ldots\cup[1_{L}]$, $\mu$ is the measure with maximal entropy supported in $\{1_{1},\ldots,1_{L}\}^{\N}$ and $\CP(\be)=\log L-\be\al$. 

In that case, the measure of maximal entropy supported in $\{1,1_{1},\ldots,1_{L}\}^{\N}$  has a pressure $\log (L+1)-\be\al>\CP(\be)$ which is impossible. 
\end{proof}

Consequently, for $\be>\be_{c}$, any the equilibrium state has support in $\{2,3,4\}^{\N}\cap \S$

\begin{proposition}
\label{prop-equil-epsbeta}
If $\eps\be_{c}>2$, then there are at least two equilibrium states for $\be=\be_{c}$. If $\eps\be_{c}\le2$, then no equilibrium state for $\be=\be_{c}$ gives positive weight to $[1]$. 
\end{proposition}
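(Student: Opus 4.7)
The plan is to apply the local thermodynamic formalism recalled in Appendix \ref{sec-locthermo} at the critical parameter $\be=\be_{c}$. By Lemma \ref{lem-pgoeszcbetac} we have $\CP(\be_{c})=\CP_{34}(\be_{c})=Z_{c}(\be_{c})$, and by definition of $\be_{c}$ (\textit{cf.} Lemma \ref{lem-lambdatoto1=1}) we have $\lambda_{Z_{c}(\be_{c}),\be_{c},[1]}=1$. In this boundary situation, the relevant assertion from Appendix \ref{sec-locthermo} is that an invariant probability measure $\mu$ projecting onto the conformal eigen-measure of $\CL_{Z_{c}(\be_{c}),\be_{c},[1]}$ (equivalently, giving positive weight to $[1]$) exists if and only if the expected first-return time $\int\tau_{[1]}\,d\nu$ is finite, which in turn is equivalent to
$$\left|\frac{\partial \CL_{Z,\be_{c},[1]}(\BBone_{[1]})(x)}{\partial Z}\right|_{Z=\CP_{34}(\be_{c})}<+\infty.$$

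Next I would evaluate this partial derivative using \eqref{equ-lambdatoto1}, which decomposes it in terms of $\partial_{Z}\S_{1},\partial_{Z}\S_{2},\partial_{Z}\S_{3}$. Since $\CP_{34}(\be)=\log(e^{\ga\be}+e^{\gadebe})\ge\log 2>0$, the exponentials $e^{-\al\be_{c}-Z}$ and $e^{-Z}$ are strictly smaller than $1$ at $Z=\CP_{34}(\be_{c})$, so $\partial_{Z}\S_{1}$ and $\partial_{Z}\S_{2}$ produce absolutely convergent geometric-type series. The sensitive contribution comes from $\S_{3}$:
$$\frac{\partial \S_{3}}{\partial Z}(Z,\be_{c})=-\frac{1}{(1+e^{\de\be_{c}})^{2}}\sum_{n=1}^{+\infty} n\left(\frac{1}{n+1}\right)^{\eps\be_{c}}e^{n(\CP_{34}(\be_{c})-Z)},$$
which at $Z=\CP_{34}(\be_{c})$ becomes the pure polynomial series $\sum_{n\ge 1} n(n+1)^{-\eps\be_{c}}$. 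It converges if and only if $\eps\be_{c}>2$.

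To conclude, if $\eps\be_{c}\le 2$ the derivative above diverges, so no equilibrium state can give positive weight to $[1]$, proving the second half of the proposition. In the opposite case $\eps\be_{c}>2$, the finiteness of the derivative produces, \emph{via} the standard induced-to-global procedure (Abramov-type formula plus normalization of $\tau_{[1]}\cdot\nu$), an equilibrium state $\mu_{1}$ with $\mu_{1}([1])>0$. Independently, by Lemma \ref{lem-pgoeszcbetac} we have $\CP(\be_{c})=\CP_{34}(\be_{c})$, so the unique equilibrium state $\mu_{34}$ of the subsystem $\S_{34}$ for $\be_{c}.\phi$ is automatically an equilibrium state for the global system, and by construction $\mu_{34}([1])=0$. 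Hence $\mu_{1}\neq \mu_{34}$ and at least two equilibrium states coexist at $\be_{c}$.

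The main obstacle is the precise importation of the criterion from Appendix \ref{sec-locthermo}: one must recognize that the boundary regime $Z=Z_{c}(\be_{c})$ with $\lambda_{Z,\be,[1]}=1$ is exactly the borderline case where the existence of an invariant measure charging $[1]$ is controlled by the finiteness of the expected first-return time, and translate this into the concrete inequality $|\partial_{Z}\CL_{Z,\be,[1]}(\BBone_{[1]})|<+\infty$. Once this translation is in place, the rest reduces to an elementary convergence analysis of the three series $\S_{1},\S_{2},\S_{3}$, with the single obstruction located in $\S_{3}$ and expressed by the sharp threshold $\eps\be_{c}=2$.
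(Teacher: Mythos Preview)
Your proposal is correct and follows essentially the same route as the paper: reduce the question to the finiteness of $\left|\partial_{Z}\CL_{Z,\be_{c},[1]}(\BBone_{[1]})\right|$ at $Z=\CP_{34}(\be_{c})$ via Fact \ref{fact-casezc}, observe that $\partial_{Z}\S_{1}$ and $\partial_{Z}\S_{2}$ converge exponentially because $\CP_{34}(\be_{c})>0$, and identify the sole obstruction as the series $\sum_{n}n(n+1)^{-\eps\be_{c}}$ coming from $\partial_{Z}\S_{3}$. Your explicit mention of $\mu_{34}$ as the second equilibrium state is handled by the paper in the paragraph immediately preceding the proposition rather than inside the proof, but the argument is identical.
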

\begin{proof}
We remind than $$\lambda_{{Z,\be,[1]}}=\CL_{{Z,\be,[1]}}(\BBone_{[1]})(x),$$
for every $x$ in $[1]$ and Equality \eqref{equ-lambdatoto1} is 
$$\lambda_{{Z,\be,[1]}}=\S_{1}(Z,\be)+\frac{\S_{2}(Z,\be)e^{-\al\be-Z}}{1-\S_{2}(Z,\be)\S_{3}(Z,\be)},$$
with $Z=\CP_{34}(\be)$. Therefore we have to compute $\disp\frac{\partial \lambda_{{Z,\be,[1]}}}{\partial Z}{}_{|Z=\CP_{34}(\be)}$. 
This quantity can be expressed in function of $\S_{1}$ $\S_{2}$ and $\S_{3}$ but also $\disp\frac{\partial \S_{1}}{\partial Z}$, $\disp\frac{\partial \S_{2}}{\partial Z}$ and $\disp\frac{\partial \S_{3}}{\partial Z}$. 

We point out that as  $\CP_{34}(\be)>0$ and we necessarily have $\log L-\al\be_{c}<\CP(\be_{c})$, the convergence in the series defining $\S_{1}$, $\S_{2}$ but also $\disp\frac{\partial \S_{1}}{\partial Z}$ and $\disp\frac{\partial \S_{2}}{\partial Z}$ are exponential. Therefore, the global convergence is equivalent to the convergence of $\disp\frac{\partial \S_{3}}{\partial Z}$, that is
\begin{equation}
\label{equ1-espbeta<2}
\sum_{n}\left(\frac1{n+1}\right)^{\eps\be}n<+\8.
\end{equation}
This holds if and only if $\eps\be_{c}>2$. 
\end{proof}

\begin{remark}
\label{rem-pluriequiltransiglobal}
If $\eps\be_{c}>2$, there exists a unique equilibrium state which gives positive weight to $[1]$. 
$\blacksquare$\end{remark}

\section{Phase transition for $\CP_{234}(\be)$. End of the proof of Theorem A}\label{sec-transicp234}

\subsection{First inequalities and obvious results}
One of the main difficulties is that we do not know, at that stage if the pressure for $\be>\be_{c}$ is strictly bigger than $\CP_{34}(\be)$ or not.

Let $\CP_{234}(\be)$ be the pressure for  the sub-system $\{2,3,4\}^{\N}\cap\S$ of points in $\S$ with no symbols in $\{1,1_{1},\ldots,1_{L}\}$ and for the potential $\be.\phi$.
As it is a subsystem of the global one, $\CP_{234}(\be)\le \CP(\be)$. Conversely, $\{3,4\}^{\N}$ is a subsystem of  $\{2,3,4\}^{\N}\cap\S$ and then $\CP_{234}(\be)\ge \CP_{34}(\be)$. Therefore 
$$\CP_{234}(\be_{c})=\CP(\be_{c})=\CP_{34}(\be_{c}).$$

The main question is to know if for $\be> \be_{c}$ one equilibrium state gives weight to the cylinder $[2]$ or not.

We recall that $\phi$ is continuous and the entropy is upper semi-continuous. Thus, there exists at least one equilibrium state, say $\wh\mu_{\be}$, in $\S_{234}$. 

\begin{lemma}
\label{lem-3measpos}
For every $\be$,  for every equilibrium state $\wh\mu_{\be}$, $\wh\mu_{\be}([3])>0$.
\end{lemma}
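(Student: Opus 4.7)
The plan is to argue by contradiction. Assume $\wh\mu_\be$ is an equilibrium state for $\be\phi$ in $\S_{234}$ with $\wh\mu_\be([3])=0$, and derive a contradiction with the inequality $\CP_{234}(\be)\ge\CP_{34}(\be)>0$ already noted.

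First I would reduce to the ergodic case. Because $\mu\mapsto h_\mu+\be\int\phi\,d\mu$ is affine in $\mu$, the simplex of equilibrium states has ergodic extreme points, so ergodic decomposition yields an ergodic equilibrium state whose $[3]$-measure is still zero. Replacing $\wh\mu_\be$ by this component and using shift-invariance, $\wh\mu_\be([3])=0$ upgrades to
\[
\wh\mu_\be\bigl(\{x\in\S_{234}:x_n\ne 3\text{ for every }n\ge 0\}\bigr)=1.
\]

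The combinatorial core is to identify this full-measure set. I would read from Figure \ref{Fig-transibutterfly} --- equivalently, from the form of $L_n(\be,[3])$ in \eqref{equ-ope3}, whose summation forces $w_0=w_{n-1}=3$ --- that the admissible transitions in $\S_{234}$ satisfy $2\to\{2,3\}$ and $4\to\{3,4\}$; in particular neither $2\to 4$ nor $4\to 2$ is admissible. Hence an orbit that never visits $[3]$ must be contained entirely in $\{2\}$ or entirely in $\{4\}$, and the only such points are the two fixed points $\ol 2$ and $\ol 4$. Ergodicity then forces $\wh\mu_\be\in\{\delta_{\ol 2},\delta_{\ol 4}\}$.

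Finally I would close via a pressure comparison. Extending $\phi$ by continuity gives $\phi(\ol 2)=\lim_n-\log\frac{n+1}{n}=0$ and $\phi(\ol 4)=\ga+\de$, and both Dirac measures have entropy zero, so
\[
h_{\wh\mu_\be}+\be\!\int\phi\,d\wh\mu_\be\in\{0,\be(\ga+\de)\},
\]
while the inclusion $\S_{34}\subset\S_{234}$ together with the explicit formula already extracted from \eqref{equ-ope3} gives
\[
\CP_{234}(\be)\ge\CP_{34}(\be)=\log\!\bigl(e^{\ga\be}+e^{(\ga+\de)\be}\bigr)=(\ga+\de)\be+\log\!\bigl(1+e^{-\de\be}\bigr),
\]
which is strictly greater than both candidate values for every $\be\ge 0$, contradicting that $\wh\mu_\be$ is an equilibrium.

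The main (and essentially only) obstacle is the combinatorial step: one must correctly unpack the butterfly transition graph to conclude that $\ol 2$ and $\ol 4$ are the only orbits avoiding $[3]$, and check that $\phi$ admits a unique continuous extension to these two fixed points. Once this is in hand, the contradiction is a one-line comparison with the explicit formula for $\CP_{34}(\be)$.
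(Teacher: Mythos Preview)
Your proof is correct and follows essentially the same route as the paper: reduce to ergodic components, use the transition structure to see that an orbit in $\S_{234}$ avoiding $[3]$ must be the fixed point $\ol 2$ or $\ol 4$, compute the free energies $0$ and $\be(\ga+\de)$, and compare with $\CP_{34}(\be)\le\CP_{234}(\be)$. The only difference is that you spell out the combinatorial step and the explicit formula $\CP_{34}(\be)=(\ga+\de)\be+\log(1+e^{-\de\be})$, whereas the paper leaves these implicit.
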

\begin{proof}
If $\wh\mu_{\be}([3])=0$, then any ergodic component of $\wh\mu_{\be}$ is either $\delta_{2^{\8}}$ or $\de_{4^{\8}}$. In
the first case the pressure is 0, in the second case it is $\be.(\de+\ga)$. In both cases the value is strictly lower
than $\CP_{34}(\be)\le \CP_{234}(\be)$. 
\end{proof}

\begin{lemma}
\label{lem-induc32bon}
Let $\wh\mu_{\be}$ be an equilibrium state for $\be.\phi$. Then $\wh\mu_{\be}([32])=0$ if and only if $\CP_{234}(\be)=\CP_{34}(\be)$. 
\end{lemma}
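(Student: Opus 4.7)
For the forward direction, suppose $\wh\mu_\be([32])=0$. The subshift $\S_{234}$ forbids the transition $4\to 2$ (as reflected by the constraint $w_{n-1}=3$ in \eqref{equ-ope3}), so $[42]=\emptyset$, and shift-invariance yields
\[
\wh\mu_\be([2])=\wh\mu_\be(\s^{-1}[2])=\wh\mu_\be([22])+\wh\mu_\be([32])+\wh\mu_\be([42])=\wh\mu_\be([22]).
\]
The same argument applied to $[2^n]$ gives $\wh\mu_\be([2^n])=\wh\mu_\be([2^{n+1}])$ for every $n\ge 1$, whence $\wh\mu_\be([2])=\wh\mu_\be(\{2^\infty\})=:p$. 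One then gets the decomposition $\wh\mu_\be=p\,\delta_{2^\infty}+(1-p)\,\nu$ with $\nu$ shift-invariant and supported on $\{3,4\}^\N$. Since $h(\delta_{2^\infty})=0$ and $\phi(2^\infty)=0$ (by continuity, $\phi(2^n*)=-\log((n+1)/n)\to 0$),
\[
\CP_{234}(\be)=h(\wh\mu_\be)+\be\int\phi\,d\wh\mu_\be=(1-p)\Bigl(h(\nu)+\be\int\phi\,d\nu\Bigr)\le (1-p)\CP_{34}(\be)\le\CP_{34}(\be),
\]
which combined with the trivial $\CP_{234}(\be)\ge\CP_{34}(\be)$ gives equality.

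For the reverse direction, I argue by contraposition. Assume $\CP_{234}(\be)=\CP_{34}(\be)$ and $\wh\mu_\be([32])>0$. The ergodic decomposition of $\wh\mu_\be$ produces some ergodic component $\mu$ with $\mu([32])>0$, and by affineness of $m\mapsto h(m)+\be\int\phi\,dm$, this $\mu$ is itself an equilibrium (at value $\CP_{34}(\be)$), with $\mu\neq\delta_{2^\infty}$ and $\mu([3])>0$. Apply the local equilibrium formalism of Appendix \ref{sec-locthermo} to the induced operator $\CL_{Z,\be,[3]}$ on $[3]$ within $\S_{234}$. First-return paths to $[3]$ fall into two types: (A) $3\cdot 4^a\cdot 3$ staying in $\{3,4\}$; (B) $3\cdot 2^b\cdot 3$ with $b\ge 1$ visiting $[2]$. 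An explicit computation paralleling \eqref{equ-lambdatoto1} shows that the type-(A) contribution to $\CL_{Z,\be,[3]}$ has spectral radius equal to that of the $\S_{34}$-induced operator on $[3]$, which by definition of $\CP_{34}(\be)$ is exactly $1$ at $Z=\CP_{34}(\be)$; the type-(B) contribution is a strictly positive, convergent series in $b$ (the key series $\sum_b(1/(b+1))^\be e^{-bZ}$ converges at $Z>0$ as in Lemma \ref{lem-condiS2S3}). Hence $\lambda_{\CP_{34}(\be),\be,[3]}>1$ in $\S_{234}$, and the monotonicity and implicit-function arguments of Lemma \ref{lem-lambdatoto1=1} then force the unique $Z$ with $\lambda_{Z,\be,[3]}=1$, namely $\CP_{234}(\be)$, to satisfy $\CP_{234}(\be)>\CP_{34}(\be)$---contradicting the hypothesis.

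The main obstacle lies in the reverse direction: since $\phi$ depends on the distance to the next $2$, the Birkhoff sum $S_\tau\phi$ along a type-(A) first-return path $3\cdot 4^a\cdot 3\cdot x$ depends on the continuation $x$, so the type-(A) contribution to $\CL_{Z,\be,[3]}$ is not constant on $[3]$. One must verify that its spectral radius nevertheless coincides with that of the $\S_{34}$-induced operator, and that the type-(B) term adds a strictly positive contribution at $Z=\CP_{34}(\be)$. Both steps parallel the calculations done for $\CL_{Z,\be,[1]}$ in Section \ref{sec-transiglobale}.
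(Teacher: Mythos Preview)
Your forward direction is correct and reaches the same conclusion as the paper, though by a different route. The paper argues more directly: from $\wh\mu_\be([32])=0$ and $\s$-invariance, every cylinder of the form $[i_0\ldots i_{n-1}32]$ has zero measure, so $\wh\mu_\be$-a.e.\ point never contains the word $32$; since $\wh\mu_\be([3])>0$ (Lemma~\ref{lem-3measpos}), this forces $\wh\mu_\be(\S_{34})=1$ and hence $\CP_{234}(\be)\le\CP_{34}(\be)$. Your approach via $\wh\mu_\be([2^n])=\wh\mu_\be([2^{n+1}])$ is valid but slightly more elaborate; note that your decomposition step needs one more line (a point in $[3]\cup[4]$ containing a $2$ lies in some $\s^{-k}[32]$, hence has measure zero), after which your affine pressure computation goes through.

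For the reverse direction, two remarks. First, the paper's own proof of this lemma does \emph{not} establish it: the sentence ``The converse inequality is true as recalled above'' refers to $\CP_{234}(\be)\ge\CP_{34}(\be)$, not to the converse implication. The reverse implication is effectively obtained only through the full analysis of $\CL_{Z,\be,[32]}$ in the following subsection. Second, your proposed route via induction on $[3]$ runs into the very obstacle you flag and does not parallel the computations in Section~\ref{sec-transiglobale} as cleanly as you suggest: because $\phi$ on $[3]$ and $[4]$ depends on the distance to the next $2$, the Birkhoff sum along a type-(A) return $3\,4^a\,3\,x$ genuinely depends on the continuation $x$, so neither Condition~(C2) nor a clean splitting ``type-(A) $=$ $\S_{34}$-operator plus a positive type-(B) remainder'' is available on $[3]$. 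This is exactly why the paper induces on $[32]$: equation~\eqref{equ-boweninduc32} shows the induced potential is constant on first-return cylinders there, and the resulting formula $\lambda_{Z,\be,[32]}=\S_2\S_3$ makes the comparison with $\CP_{34}(\be)$ transparent. If you want a self-contained reverse implication, redo your argument with $[32]$ in place of $[3]$.
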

\begin{proof}
If $\wh\mu_{\be}([32])=0$, then $\s$-invariance immediately yields that any cylinder  of the form $[i_{0}i_{1}\ldots i_{n-1}32]$ has null $\wh\mu_{\be}$-measure. As $\wh\mu_{\be}([3])>0$, this shows that $\wh\mu_{\be}(\S_{34})=1$, thus $\CP_{234}(\be)\le \CP_{34}(\be)$. The converse inequality is true as recalled above. 
\end{proof}

For our purpose we will thus induce on the cylinder $[32]$. To avoid heavy notations, the first return will simply be denoted by $\tau$ and the first return map by
$T$.

\subsection{Induced operator in $[32]$}
Consider a point in $[32]$ say $x:=32x_{2}x_{3}\ldots$. Any $y$ satisfying $T(y)=x$ is of the form
$$y=3\underbrace{2\ldots 2}_{\text{at least one }2}3\omega32x_{2}x_{3}\ldots,$$
where $\omega$ is a word in 3 and 4.

If $x':=32x'_{2}x'_{3}\ldots$ $y:=32^{n}3\omega32x_{2}x_{3}\ldots$ and $y':=32^{n}3\omega32x'_{2}x'_{3}\ldots$, we emphasize that 
\begin{equation}\label{equ-boweninduc32}
S_{n+1+|\omega|+1}(\phi)(y')=S_{n+1+|\omega|+1}(\phi)(y)
\end{equation}
holds. This is the main interest to induce on $[32]$. 

Then, the family of  transfer operators for $T$ and $\be.\phi$ is defined by 
\begin{equation}
\label{equ-CL32}
\CL_{{Z,\be,[32]}}(\psi)(x):=\frac1{2^{\eps\be}}e^{\ga\be-Z}\sum_{n=1}^{+\8}\sum_{\omega}\left(\frac1{n+1}\right)^{\be}e^{-nZ}\left(\frac2{|\omega|+3}\right)^{\eps\be}e^{|\omega|\ga\be+(\#4\in\omega).\de\be-|\omega|Z}\psi(32^{n}\omega x),
\end{equation}
where $\psi$ belongs to $\CC^{0}([32])$, $x$ starts with 32\ldots, $\omega$ is a  (possibly empty) word with digits 3 and 4 and starting with 3 and $\#4\in\omega$ is the number of 4's in $\omega$. 

Due to \eqref{equ-boweninduc32}, the potential (for induction) satisfies condition (C2) in Appendix  \ref{sec-locthermo}, and then we have
by Lemma \ref{lem-specradiCL32}

$$\lambda_{{Z,\be,[32]}}=\CL_{{Z,\be,[32]}}(\BBone_{[32]})=\frac{1}{(1+e^{\de\be})^{2}}\sum_{n=1}^{+\8}\left(\frac1{n+1}\right)^{\be}e^{-nZ}\sum_{m=1}^{+\8}\left(\frac1{m+1}\right)^{\eps\be}e^{m(\CP_{34}(\be)-Z)}.$$

Now, the implicit equation 
$$\lambda_{{Z,\be,[32]}}=1,$$
is exactly realized for $Z=\wt Z_{c}(\be)$ (see Lemma \ref{lem-condiS2S3}) and holds if and only if $\be\le\be_{1}$. By definition of $\be_{1}$, for every $\be>\be_{1}$, 
$$\S_{2}\S_{3}<1,$$
and then Facts \ref{fact-spectrad=1equil}, \ref{fact-specrad<1} and \ref{fact-casezc} show that

$\bullet$ $\CP_{234}(\be)=\wt Z_{c}(\be)$ for $\be<\be_{1}$,

$\bullet$  there is a unique equilibrium state for $\be<\be_{1}$, and it is fully supported in $\S_{234}$,

$\bullet$ the unique equilibrium state for $\be>\be_{1}$ is the one in $\S_{34}$, 

$\bullet$ there are two equilibrium state for $\be=\be_{1}$ if and only if $\eps\be_{1}>2$. 

At that point, all the results stated in Theorem A are proved except  $\eps.\be_{1}<2$ and the fact that condition $\eps.\be_{c}\lesseqgtr2$ and  can be realized.

\begin{figure}[htbp]
\begin{center}
\includegraphics[scale=0.6]{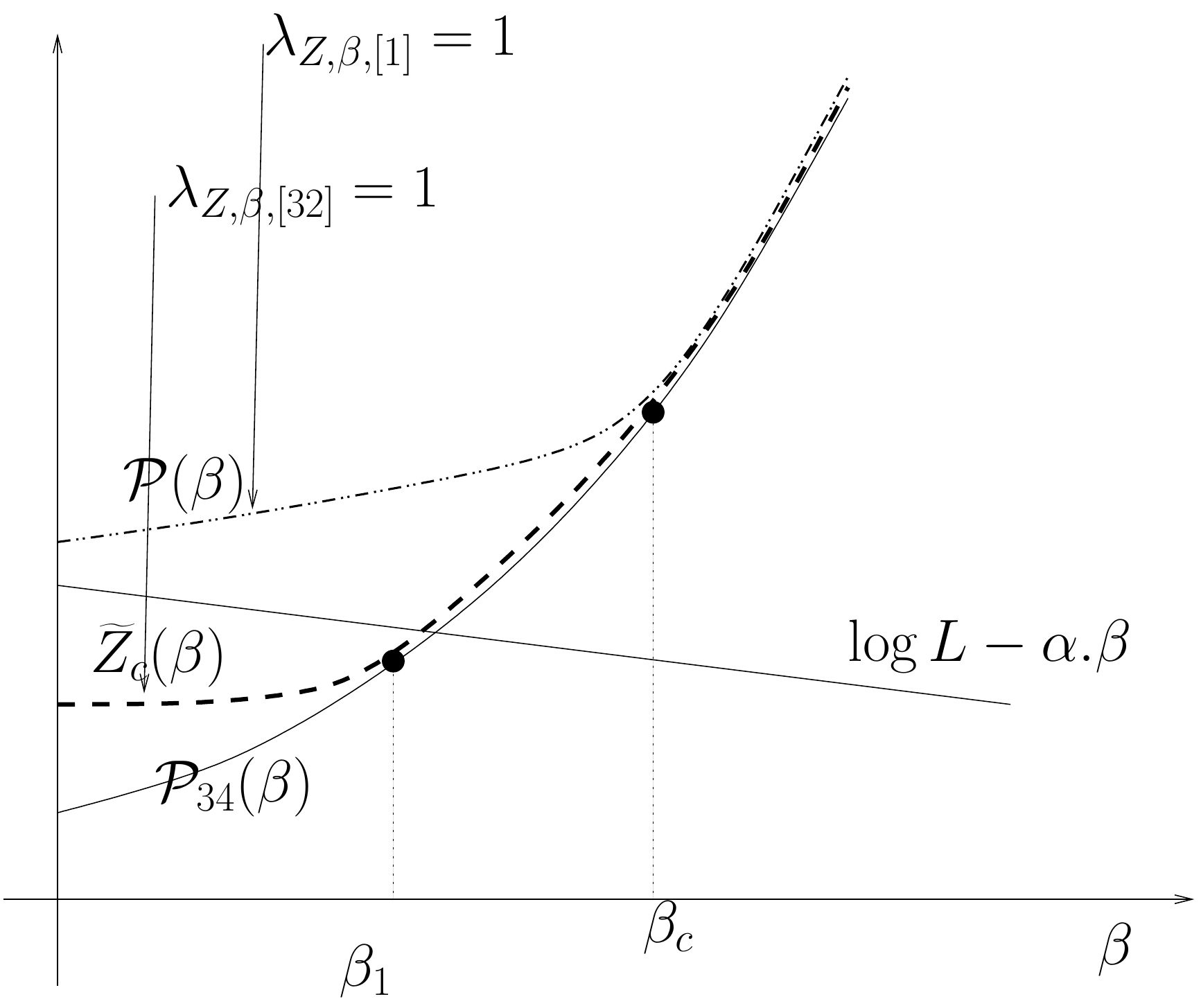}
\caption{Principal curves.}
\label{fig-curves}
\end{center}
\end{figure}

\subsection{End of the proof of Theorem A}
\subsubsection{Proof that $\eps\be_{1}<2$}

We remind that $\be_{1}$ is defined by the implicit formula \eqref{equ-beta1}:

$$\sum_{n=1}^{+\8}\left(\frac1{n+1}\right)^{\be_1}e^{-n\CP_{34}(\be_1)}\frac{\zeta(\eps\be_1)-1}{(1+e^{\de\be_1})^{2}}=1,$$
with $\CP_{34}(\be)=\ga\be+\log(1+e^{\de\be})$. 
Note that $\CP_{34}(\be)$ is always larger than $\log2$, thus for every choices of the parameters
$$\zeta(\eps\be_{1})\ge (1+e^{\de\be_{1}})^{2}+1>5.$$
Now, $\zeta(2)=\disp\frac{\pi^{2}}6$, which shows  $\eps\be_{1}<2$. 

\subsubsection{Values for $\eps\be_{c}$}
We remind that $\be_{c}$ is given by the implicit formula derived from  Inequality \eqref{equ-lambdatoto1P34}:

\begin{equation}
\label{equ-implicibetac}
\sum_{n=1}^{+\8}e^{-n\al\be_{c}-n\CP_{34}(\be_{c})+(n-1)\log L}+\frac{\sum_{n=1}^{+\8}\left(\frac1{n+1}\right)^{\be_{c}}e^{-n\CP_{34}(\be_{c})}e^{-\al\be_{c}-\CP_{34}(\be_{c})}}{1-\frac1{(1+e^{\de\be_{c}})^{2}}\sum_{n=1}^{+\8}\left(\frac1{n+1}\right)^{\be_{c}}e^{-n\CP_{34}(\be_{c})}\sum_{n=1}^{+\8}\left(\frac1{n+1}\right)^{\eps\be_{c}}}=1.
\end{equation}
We have already seen that increasing $L$ is a simple way to force $\eps\be_{c}>2$ to hold.

Remind that $\be_{c}>\be_{1}$ and $\eps.\be_{1}>1$. Then, assume that $\de\to+\8$, $\eps$ being fixed, this yields $\de\be_{c}\to+\8$. Reporting this in \eqref{equ-implicibetac}, the first summand  and the numerator  of the fraction go to $0$ if $\de\to+\8$. Consequently, the denominator must also tend to 0,  and as $\de\be_{c}$ tends to $+\8$, we must have 
$$\eps\be_{c}\to 1.$$
It is thus lower than $2$ if $\delta$ is sufficiently large.

\section{Proof of Theorem B}\label{sec-theoB}

For proving Theorem B we consider the next subshift of finite type:

\begin{figure}[htbp]
\begin{center}
\includegraphics[scale=0.5]{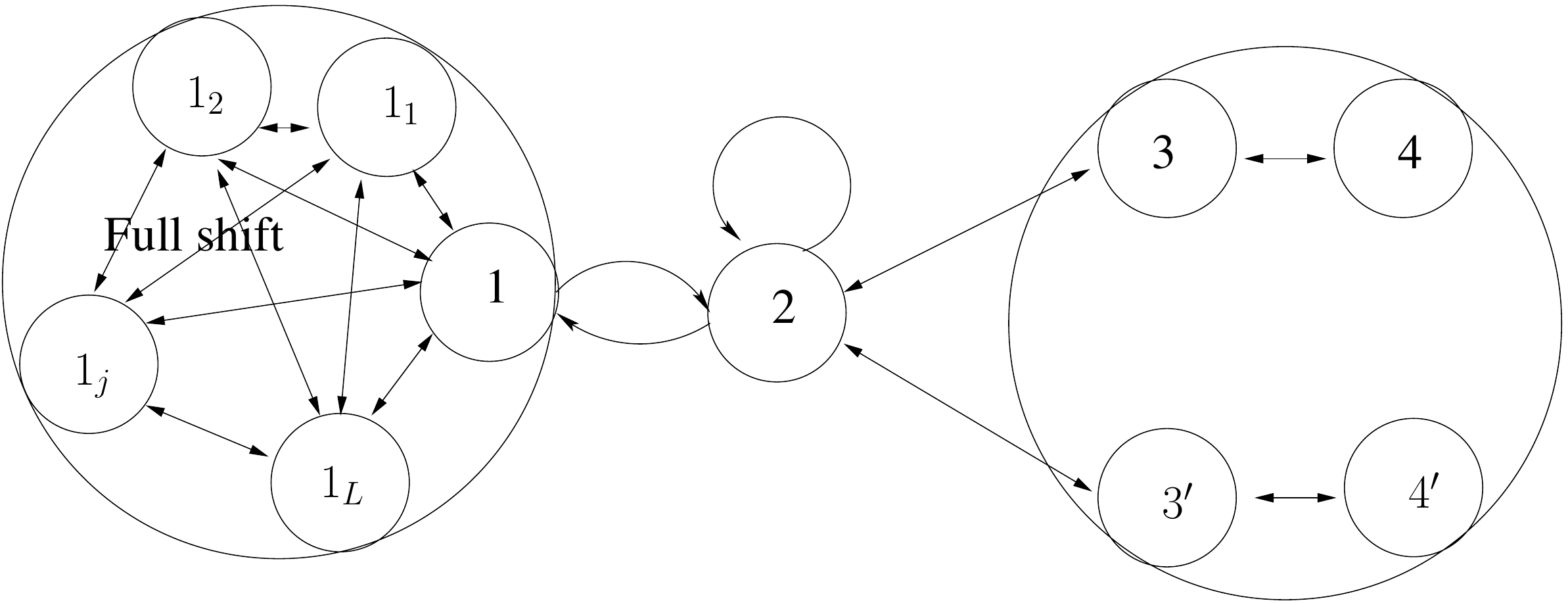}
\caption{Dynamics  for Theorem B}
\label{Fig-butterflyThB}
\end{center}
\end{figure}

\subsection{Phase transition for $\CP(\be)$}
We now explain how to adapt the results from the proof of Theorem A to  this new case. 
Inducing in $[1]$, orbits going to $[2]$ will/can visit $\S_{34}$ or $\S_{3'4'}$ before visiting $[2]$ again. More precisely, after a string of 2' we can either get a word of the form $3\omega3$ which $\omega$ a word with only 3 or 4 as digits, or a word of the form $3'\omega'3'$ with $\omega'$ a word with only 3' and 4'. 

By symmetry of the potential, any sum $\S_{3}$ as thus to be replaced by $2\S_{3}=\S_{3}+\S_{3}$, one for a string in 3 and 4 and one for a string in 3' and 4'. 

Consequently the condition \eqref{equ-condiS2S3} has to be replaced by 
\begin{equation}
\label{equcondiS2S3double}
\S_{2}\S_{3}<\frac12.
\end{equation}
The implicit formula (similar to \eqref{equ-beta1}) we have to consider is 
$$\S_{2}(\CP_{34}(\be_{2}),\be_{2})\S_{3}(\CP_{34}(\be_{2}),\be_{2})=\frac12,$$
where $\be_{2}$ replaces $\be_{1}$:
for $\be<\be_{2}$, $\wt Z_{c}(\be)$ is strictly larger than $\CP_{34}(\be)$. 
For $\be\ge \be_{2}$,  $\wt Z_{c}(\be)= \CP_{34}(\be)$. 

Similarly, the new value for the spectral radius $\lambda_{{Z,\be,[1]}}$ is 
$$\S_{1}+\frac{\S_{2}e^{-\al\be-Z}}{1-2\S_{2}\S_{3}},$$
and there exists $\be'_{c}>\be_{2}$ such that for every $\be>\be'_{c}$, $\lambda_{{Z,\be,[1]}}<1$. Then, for $\be>\be_{c}'$, $\CP(\be)=\CP_{2343'4'}(\be)$, and any equilibrium state has support 
in $\S_{2343'4'}$ which is $\S$ restricted to words without digits $1,1_{1},\ldots 1_{L}$. 

\subsection{Phase transition for $\CP_{2343'4'}(\be)$}
Again, we shall induce on the cylinder $[32]$. In that new case, the orbit leaving $[32]$ and then returning back to $[32]$ have the form:
$$32(\text{ string of 2's})\left(\text{ intermittence of strings of 3' 's or 4' 's and strings of 2's}\right)(\text{ string of 3's or 4's})32.$$
This yields that the new equation to consider for $\lambda_{{Z,\be,[32]}}$ is 
\begin{eqnarray*}
\lambda_{{Z,\be,[32]}}&=&
\S_{2}(\CP_{34}(\be),\be)\left(\sum_{n=0}^{+\8}(\S_{2}(\CP_{34}(\be),\be)\S_{3}(\CP_{34}(\be),\be))^{n}\right)\S_{3}(\CP_{34}(\be),\be)\\
&=&\frac{\S_{2}(\CP_{34}(\be),\be)\S_{3}(\CP_{34}(\be),\be)}{1-\S_{2}(\CP_{34}(\be),\be)\S_{3}(\CP_{34}(\be),\be)},
\end{eqnarray*}

where we took into account the symmetry for $\S_{34}$ and $\S_{3'4'}$. 

Note that $z\mapsto\disp \frac{z}{1-z}$ increases for $z<1$, and then 
$$\lambda_{{Z,\be,[32]}}<1\iff \S_{2}(\CP_{34}(\be),\be)\S_{3}(\CP_{34}(\be),\be)<\frac12.$$
This shows that $\be_{2}$ is a transition parameter for $\S_{2343'4'}$: for $\be<\be_{2}$ there exists a unique equilibrium state in $\S_{2343'4'}$ and it is fully supported. For $\be>\be_{2}$, no equilibrium state gives weight to $[32]$. For symmetric reason, no equilibrium state gives weight to $[3'2]$, and thus, there are two equilibrium states  which are the ones in $\S_{34}$ and in $\S_{3'4'}$.

For $\be>\be_{c}'$, there is no more a single global equilibrium state (for $\S$ and $\CP(\be)$) but these two ``smaller'' equilibria  in $\S_{34}$ and in $\S_{3'4'}$. Nevertheless, $\CP(\be)=\CP_{34}(\be)$ is analytic for $\be>\be'_{c}$.

\subsection{G\^{a}teaux-differentiability in other directions}

To use vocabulary from \cite{iommi-todd}, the potential $\phi$ is a kind of grid function: it is constant on cylinders of the form $[1]$, $[1_{i}]$, $[2^{n}*]$, $[3\omega32]$ with $\omega\in\{3,4\}^{n}$ for some $n$ and $[3'\omega3'2]$ with $\omega\in\{3',4'\}^{n}$ for some $n$.  

Let $\CV$ be the set of such functions, which are in addition H\"older continuous and totally symmetric in $3\leftrightarrow 3'$ and $4\leftrightarrow 4'$. $\CV$ is  infinite-dimension in $\CC(\S)$.

 It we pick some $\varphi$ in $\CV$, for $\be>\be_{c}'$, and for $t\in (-\eta,\eta)$ with $\eta\approx 0^{+}$, the spectrums for the  induced transfer operators for $\be.\phi+t.\varphi$ are the same than for $\be.\phi$.

As things are totally symmetric in $\S_{34}$ or $\S_{3'4'}$, there will still be two equilibrium states and the pressure is differentiable in direction $\varphi$ because we are into the domains with spectral radiuses $<1$. 

\begin{remark}
\label{rem-extention gateau}
It is actually highly probable that $P(\be.\phi+.)$ is G\^{a}beaux differentiable in the direction of any $\varphi$ which is H\"older and totally symmetric in $3\leftrightarrow 3'$ and $4\leftrightarrow 4'$ (and not necessarily a grid function). 
$\blacksquare$\end{remark}



\appendix
\section{some recall on induced transfer operator}\label{sec-locthermo}

We recall here some results from the construction of local equilibrium state as it was done in \cite{leplaideur1} and develop by the author in later works. 
We emphasize that the discussion $\lambda_{Z_{c},[i]}\gtreqqless1$ and $\disp\left|\frac{\partial\CL_{Z}}{\partial Z}\right|\le+\8$ is very similar to the cases of positive/null recurrence and transience in Sarig's work (see \eg \cite{sarig3,cyr-sarig}). 

\subsection{Local equilibrium states}
We consider a  one-sided subshit of finite type $\wh\S$ and a continuous potential $\varphi:\wh\S\to\R$. We consider a symbol $[i]$ and the associated cylinder $[i]$. Then, we consider the first return time  $\tau$ in $[i]$ and the first return map $g$. Even if the map $g$ is not defined everywhere, the main property is that inverse-branches are well-defined. 

The notion of local equilibrium state follows from the next question:  is there a way to do thermodynamical formalism for $([i],g)$ and some ``good'' potential such that the measure obtained in $[i]$,  invariant  for $g$ is the conditional measure of the/a equilibrium state for $\varphi$ ? 

For that purpose we consider the induced operator on $[i]$ depending on a parameter $Z$:
$$\CL_{Z}(\psi)(x):=\sum_{y\, g(y)=x}e^{S_{\tau(y)}(\varphi)(y)-Z.\tau(y)}\psi(y).$$
We recall that $\tau(y)$ is the return time: $g(y)=\s^{\tau(y)}(y)$. The parameter $Z$ is put to make the series converge. 

Namely, if the potential $\varphi$ satisfies the local Bowen condition:
$$\exists\, C\ \forall\, y,\, y'\in [ij_{1}\ldots j_{n-1}i],\ \left|S_{n}(\varphi)(y)-S_{n}(\varphi)(y')\right|<C,$$
where the symbols $j_{k}$ are different from $i$, then, there exists some critical $Z_{c}$ such that for every $\psi$ continuous and for every $x$, $\CL_{Z}(\psi)(x)$ converges for $Z>Z_{c}$ and diverges for $Z<Z_{c}$. 

Now, it is well-known that the Bowen condition yields good spectral properties for $\CL_{Z}$ (see  \eg \cite{baladibook}) and then existence and uniqueness of an equilibrium state for the system $([i],g)$ and the potential $S_{\tau(.)}(\varphi)(.)-Z.\tau(.)$ (for $Z>Z_{c}$). If it is denoted by $\mu_{Z,[i]}$, then the pressure is $\log\lambda_{Z,[i]}$, where $\lambda_{Z,[i]}$ is the spectral radius of $\CL_{Z}$. The measure $\mu_{Z,[i]}$ is referred to as a \emph{local equilibrium state}.

 We remind that the local Bowen condition follows from one of the 2 next conditions:
\begin{itemize}\label{condi-clz}
\item[(C1)---] the potential $\varphi$ is H\"older continuous,
\item[(C2)---] the potential $\varphi$  is such that $\CL_{Z}(\BBone_{[i]})(x)$ does not depend on $x$.
\end{itemize}

\subsection{Relation with global equilibrium state}

For every $Z>Z_{c}$, there exists a unique $\s$-invariant probability in $\wh\S$ $\wh\mu_{Z,[i]}$ such that its conditional measure (conditionally to $[i]$) is $\mu_{Z,[i]}$. Then, it turns out that $\wh\mu_{Z,[i]}$ is the unique equilibrium state for $\varphi-\lambda_{Z,[i]}\BBone_{[i]}$. 
Indeed, we have  for every $\s$-invariant  probability $\nu$
$$h_{\nu}(\s)+\int\varphi\,d\nu\le Z+\lambda_{Z,[i]}\nu([i]),$$
with equality if and only if $\nu=\wh\mu_{Z,[i]}$.

%

Thus, the unique equilibrium state for $\varphi$ (if it exists) is morally obtained by opening out the measure $\mu_{Z,[i]}$ for $Z=\CP(\varphi)$, or presumably equivalently, for the unique $Z$ such that $\lambda_{Z,[i]}=1$\footnote{This unique $Z$ turns out to be the pressure of $\varphi$.}

\medskip
However, one important condition to be checked is $Z_{c}<\CP(\varphi)$. 
If this condition holds, then the unique global equilibrium state is exactly corresponding to the unique local equilibrium state for $Z=\CP(\varphi)$. 

An convexity argument shows that $Z\mapsto Z-\lambda_{Z,[i]}\wh\mu_{Z,[i]}$ is maximal, either for $Z$ such that $\lambda_{Z,[i]}=1$, and in that case we get a global maximum, or for $Z=Z_{c}$ if for any $Z\ge Z_{c}$, $\lambda_{Z,[i]}\le1$. 

\begin{fact}\label{fact-spectrad=1equil}
 If there exists $Z>Z_{c}$ such that $\lambda_{Z,[i]}=1$, then necessarily $Z=\CP(\varphi)$, local \footnote{for $Z=\CP(\varphi)$} and global equilibrium are unique  and coincide up to conditioning to $[i]$. 
 \end{fact}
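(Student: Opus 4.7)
The plan is to exploit the variational inequality stated just above the Fact, which in its natural Abramov--Kac form reads
$$h_{\nu}(\s)+\int\varphi\,d\nu\le Z+\nu([i])\log\lambda_{Z,[i]}$$
for every $\s$-invariant probability $\nu$, with equality if and only if $\nu=\wh\mu_{Z,[i]}$. This bound is itself the Abramov--Kac translation of the fact that $\mu_{Z,[i]}$ is the unique equilibrium state for $([i],g)$ with induced potential $S_{\tau}\varphi-Z\tau$ and pressure $\log\lambda_{Z,[i]}$, combined with the identities $h_{\nu}(\s)=\nu([i])h_{\bar{\nu}}(g)$ and $\int\varphi\,d\nu=\nu([i])\int S_{\tau}\varphi\,d\bar{\nu}$ coming from Kac's lemma. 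First I would substitute $\lambda_{Z,[i]}=1$: the right-hand correction $\nu([i])\log\lambda_{Z,[i]}$ vanishes and we are left with the clean bound $h_{\nu}(\s)+\int\varphi\,d\nu\le Z$, still with equality only at $\nu=\wh\mu_{Z,[i]}$.

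Next, the variational principle does all the work. Taking the supremum over $\nu$ gives $\CP(\varphi)\le Z$; evaluating at $\nu=\wh\mu_{Z,[i]}$ in the equality case gives $h_{\wh\mu_{Z,[i]}}(\s)+\int\varphi\,d\wh\mu_{Z,[i]}=Z$, hence $\CP(\varphi)\ge Z$. Together these force $Z=\CP(\varphi)$ and show that $\wh\mu_{Z,[i]}$ is a global equilibrium state. Uniqueness is then immediate from the equality clause: any other global equilibrium must saturate the inequality and thus coincide with $\wh\mu_{Z,[i]}$. The corresponding local uniqueness on $([i],g)$ is already part of the local-equilibrium setup, and the ``coincidence up to conditioning'' is built into the construction, since $\wh\mu_{Z,[i]}$ was defined precisely as the unique $\s$-invariant lift whose conditional on $[i]$ is $\mu_{Z,[i]}$.

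I do not expect a genuine obstacle, since the substantive content has already been packaged in the local equilibrium formalism recalled above; the argument is essentially a one-line deduction once $\lambda_{Z,[i]}=1$ is plugged in. The only small points to watch are (i) that $\wh\mu_{Z,[i]}([i])>0$, which follows from $Z>Z_{c}$ and the finiteness of the expected return time under $\mu_{Z,[i]}$ via Kac's lemma, and (ii) ruling out competing equilibria that give zero mass to $[i]$. Point (ii) is absorbed by the same bound: any such hypothetical equilibrium would have to saturate $h_{\nu}+\int\varphi\,d\nu=Z$ and hence, by the equality clause, coincide with $\wh\mu_{Z,[i]}$, contradicting $\nu([i])=0<\wh\mu_{Z,[i]}([i])$.
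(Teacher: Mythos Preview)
Your proposal is correct and follows precisely the route the paper sketches: the Fact is stated without proof (it is recalled from \cite{leplaideur1}), but the paper places immediately before it the key variational inequality $h_{\nu}(\s)+\int\varphi\,d\nu\le Z+\nu([i])\log\lambda_{Z,[i]}$ with equality only at $\wh\mu_{Z,[i]}$, and your argument is the natural one-line deduction from that inequality once $\lambda_{Z,[i]}=1$. Your care in flagging the two technical points (finiteness of the expected return time for $Z>Z_c$, and handling of measures with $\nu([i])=0$ via the same equality clause) is appropriate and consistent with how the paper packages these issues.
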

Note that in that case\footnote{The philosophy could be that everything is good at the same moment.}, $Z_{c}<\CP(\varphi)$.

\subsection{Detection or non-detection of an equilibrium state by the induction}
A critical case is when $Z_{c}=\CP(\varphi)$. In that case two situations may happen.

\begin{fact}
\label{fact-specrad<1}
If $\varphi$ satisfies some conditions yielding existence and uniqueness of the local equilibrium state, if for every $Z\ge Z_{c}$, $\lambda_{Z,[i]}<1$, then, no equilibrium state in the whole system $(\wh\S,\s)$ gives weight to the cylinder $[i]$. 
\end{fact}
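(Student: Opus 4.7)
The plan is to exploit the variational-type inequality that underlies the entire local-equilibrium framework recalled in the Appendix: for every $\s$-invariant probability $\nu$ on $\wh\S$ and every $Z>Z_{c}$,
\begin{equation}
h_{\nu}(\s)+\int\varphi\,d\nu\;\le\;Z+\nu([i])\log\lambda_{Z,[i]},
\label{equ-plan-vp}
\end{equation}
with equality iff $\nu=\wh\mu_{Z,[i]}$. This is the inequality already quoted in the excerpt (in its correct form with $\log\lambda_{Z,[i]}$): it is obtained by applying the standard variational principle to the first-return system $([i],g)$ with potential $S_{\tau}\varphi-Z\tau$, whose topological pressure on $[i]$ equals $\log\lambda_{Z,[i]}$, and then converting back to the global system via the Abramov--Kac identities $\int\tau\,d\mu=1/\nu([i])$ and $\int S_{\tau}\varphi\,d\mu=\int\varphi\,d\nu/\nu([i])$.

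Granting \eqref{equ-plan-vp}, the proof proceeds in three short steps. First, since $\lambda_{Z,[i]}<1$ for every $Z\ge Z_{c}$, no $Z>Z_{c}$ can satisfy $\lambda_{Z,[i]}=1$, so by Fact~\ref{fact-spectrad=1equil} we are in the critical regime $\CP(\varphi)=Z_{c}$. Second, assume for contradiction that $\wh\mu$ is an equilibrium state for $\varphi$ with $\wh\mu([i])>0$. Feeding $\wh\mu$ into \eqref{equ-plan-vp} and using $h_{\wh\mu}(\s)+\int\varphi\,d\wh\mu=\CP(\varphi)=Z_{c}$ yields
\[
Z_{c}\;\le\;Z+\wh\mu([i])\log\lambda_{Z,[i]}\qquad\text{for every }Z>Z_{c}.
\]
Because $\log\lambda_{Z,[i]}<0$, this rearranges into
\[
0<\wh\mu([i])\le\frac{Z-Z_{c}}{-\log\lambda_{Z,[i]}}.
\]
Third, letting $Z\downarrow Z_{c}$ the numerator vanishes while the denominator stays bounded below by $-\log\lambda_{Z_{c},[i]}>0$, which forces $\wh\mu([i])=0$, a contradiction.

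The only genuinely non-routine point, and the one I expect to be the real obstacle, is the justification that $-\log\lambda_{Z,[i]}$ stays bounded away from $0$ as $Z\downarrow Z_{c}$; equivalently, that $\lambda_{Z,[i]}$ extends continuously to $Z=Z_{c}$ from the right. On $(Z_{c},+\8)$, continuity (indeed analyticity) is standard spectral perturbation theory for the family $\CL_{Z}$ under condition (C1) or (C2). At the boundary point $Z_{c}$, however, one must return to the explicit series expansion of $\CL_{Z}(\BBone_{[i]})(x)$ in $Z$ and invoke monotone (or dominated) convergence term-by-term, using that the series still converges at $Z=Z_{c}$ precisely because $\lambda_{Z_{c},[i]}<1$. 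This is what separates the scenario of the present statement from the dual one treated in Fact~\ref{fact-casezc}, and it is what legitimizes the limiting passage in the last step.
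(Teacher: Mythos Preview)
The paper does not prove this Fact: it appears in the Appendix as a result recalled from \cite{leplaideur1}, with no argument supplied. There is therefore no paper proof to compare against, and I assess your proposal on its own.

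Your overall strategy---feeding a putative equilibrium state $\wh\mu$ with $\wh\mu([i])>0$ into the Abramov--Kac variational inequality
\[
h_{\nu}(\s)+\int\varphi\,d\nu\;\le\;Z+\nu([i])\log\lambda_{Z,[i]}
\]
and deriving a contradiction---is correct and is the natural route. Two points deserve comment.

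\emph{Step~1 is not justified as written.} Fact~\ref{fact-spectrad=1equil} is a one-way implication (if some $Z>Z_c$ has $\lambda_{Z,[i]}=1$ then that $Z$ equals $\CP(\varphi)$); the absence of such a $Z$ does not by itself force $\CP(\varphi)=Z_c$. What you actually need is the background inequality $Z_c\le\CP(\varphi)$, which holds because the first-return partition sums are dominated by the full ones and the latter grow like $e^{n\CP(\varphi)}$. Combined with the bound $\CP(\varphi)<Z$ for every $Z>Z_c$ (which your inequality gives once $\wh\mu([i])>0$ is assumed), this yields $\CP(\varphi)=Z_c$.

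\emph{The limit $Z\downarrow Z_c$ and the continuity discussion are unnecessary.} The hypothesis explicitly includes $Z=Z_c$: it asserts $\lambda_{Z_c,[i]}<1$, so $\CL_{Z_c}$ is a well-defined bounded operator and the variational inequality applies at $Z=Z_c$ directly. Plugging in $Z=Z_c=\CP(\varphi)$ gives $0\le\wh\mu([i])\log\lambda_{Z_c,[i]}$, hence $\wh\mu([i])=0$ in one line. The boundary-continuity issue you flagged as the ``real obstacle'' therefore never arises. Even more directly: since $\CP(\varphi)\ge Z_c$ always, one may apply the inequality at $Z=\CP(\varphi)$ and conclude immediately, without ever establishing $\CP(\varphi)=Z_c$.
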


We recall that under assumption on the kind (C1) or (C2), the local equilibrium state  is of the form $d\mu_{Z,[i]}=H_{Z,[i]}d\nu_{Z,[i]}$, where $\nu_{Z,[i]}$ is the eigen-probability measure for the dual operator $\CL^{*}_{Z,[i]}$ and $H_{Z,[i]}$ the eigen-function for $\CL_{Z}$, both associated to the spectral radius $\lambda_{Z}$. Uniqueness is obtained via the normalization 
$$\int H_{Z,[i]}\,d\nu_{Z,[i]}=1.$$
We point out that the function $H_{Z,[i]}$ is continuous and positive (otherwise the mixing property would show it is null). In other words, both measures $\mu_{Z,[i]}$ and $\nu_{Z,[i]}$ are equivalent. 

Now, such a measure is the restriction and renormalization of a global invariant measure if and only off it satisfies
$$\int \tau(x)\, d\mu_{Z,[i]}(x)<+\8.$$
Equivalence of the measures implies that this last condition is equivalent to $\disp \left|\frac{\partial \CL_{Z}(\BBone_{[i]})}{\partial Z}(x)\right|<+\8.$

\begin{fact}
\label{fact-casezc}
We emphasize that the construction of local equilibrium state holds for $Z=Z_{c}$ if $\CL_{Z_{c}}(\BBone_{[i]})(x)$ converges for every (or equivalently some) $x$. 

The fact that this local equilibrium state can be opened out to a global equilibrium state only depends if the expectation of its return time is finite or not. 
\end{fact}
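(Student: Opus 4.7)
The statement packages two independent claims, and my plan treats them one at a time after a brief setup.

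For the first claim --- that the local equilibrium construction goes through at the boundary parameter $Z=Z_c$ as soon as $\CL_{Z_c}(\BBone_{[i]})(x)$ converges for some $x$ --- the plan is to reduce everything to the local Bowen property guaranteed by (C1) or (C2). For any return-branch cylinder $[i j_1 \ldots j_{n-1} i]$ and any two points $y, y'$ in it, the Bowen constant $C$ gives
$$e^{-C} \le \frac{e^{S_n(\varphi)(y) - Z\tau(y)}}{e^{S_n(\varphi)(y') - Z\tau(y')}} \le e^{C}.$$
Summing over all return branches produces the two-sided comparison between $\CL_{Z_c}(\BBone_{[i]})(x)$ and $\CL_{Z_c}(\BBone_{[i]})(x')$ for any $x, x' \in [i]$. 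This single observation simultaneously proves the parenthetical ``every $\equiv$ some'' equivalence and shows that $\CL_{Z_c}$ is a bounded positive operator on $\CC([i])$ (and, under (C1), on the associated H\"older space). From here the standard Ionescu-Tulcea-Marinescu / Hennion-Birkhoff cone machinery yields a simple dominant eigenvalue $\lambda_{Z_c,[i]}$, a positive continuous eigenfunction $H_{Z_c,[i]}$, and a dual eigen-probability $\nu_{Z_c,[i]}$; the local equilibrium state at the critical value is then $d\mu_{Z_c,[i]} = H_{Z_c,[i]}\, d\nu_{Z_c,[i]}$, normalized to unit mass.

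For the second claim --- that $\mu_{Z_c,[i]}$ extends to a global equilibrium state precisely when its return time is integrable --- I would invoke the classical Kac--Abramov tower construction. Define a $\s$-invariant measure on $\wh\S$ by
$$\wh\mu(A) = \sum_{k \ge 0} \mu_{Z_c,[i]}\bigl(\{y \in [i] : \tau(y) > k,\ \s^k(y) \in A\}\bigr),$$
and verify $\s$-invariance from the $g$-invariance of $\mu_{Z_c,[i]}$ via a routine telescoping argument. The total mass of $\wh\mu$ collapses to $\int_{[i]} \tau\, d\mu_{Z_c,[i]}$, so $\wh\mu$ normalizes to a probability exactly when the expected return time is finite. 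When it is finite, Abramov's entropy formula together with the local variational inequality recorded in Fact~\ref{fact-spectrad=1equil} (which at the critical value reads $h_\nu + \int\varphi\,d\nu \le Z_c + \lambda_{Z_c,[i]}\nu([i])$) identifies the normalized $\wh\mu$ as a global equilibrium state for $\varphi$. When it is infinite, $\wh\mu$ is only $\s$-finite and cannot give rise to an equilibrium probability measure via this route, so the local equilibrium fails to open out.

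The main obstacle lies in the first claim, at the \emph{exact} critical parameter: away from $Z_c$ one has geometric slack that powers the usual spectral arguments, but at $Z=Z_c$ the series defining $\CL_{Z_c}(\BBone_{[i]})$ converges only marginally, so the spectral gap must be extracted purely from the uniform Bowen-type distortion rather than from exponential decay in $Z - Z_c$. Consequently the short preliminary lemma upgrading pointwise convergence at one $x$ to uniform convergence on $[i]$ is genuinely doing the work, and I would record it explicitly before invoking the quasi-compactness machinery. The second claim, by contrast, is essentially bookkeeping once the standard Kac--Abramov formalism is set up.
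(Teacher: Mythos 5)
Your proposal is correct and follows essentially the same route as the paper: the Fact is stated there without a formal proof, but the surrounding discussion in the appendix --- the local Bowen condition powering the spectral construction of $d\mu_{Z,[i]}=H_{Z,[i]}\,d\nu_{Z,[i]}$ even at $Z=Z_c$ once $\CL_{Z_c}(\BBone_{[i]})$ converges, and the observation that this local measure opens out to a global invariant (hence equilibrium) probability if and only if $\int\tau\,d\mu_{Z,[i]}<+\infty$ --- is exactly the argument you flesh out via the distortion bound and the Kac--Abramov tower. No gap.
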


Consequently, if for $Z=Z_{c}$, $\lambda_{Z_{c},[i]}=1$, the local equilibrium state gives a global equilibrium state for $\varphi$ if and only if $\disp \left|\frac{\partial \CL_{Z}(\BBone_{[i]})}{\partial Z}(x)\right|<+\8.$ There may be other equilibrium states, and they do not give weight to $[i]$. 

\subsection{The special case where Condition (C2) holds}\label{subsec-condiC2}

\begin{lemma}
\label{lem-specradiCL32}
If Condition (C2) holds, then for every $x$ in $[i]$,
$\disp\lambda_{Z,[i]}=\CL_{Z}(\BBone_{[i]})(x)$.
\end{lemma}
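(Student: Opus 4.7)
The plan is to exploit Condition (C2) directly: it says that the function $x\mapsto \CL_{Z}(\BBone_{[i]})(x)$ is constant on $[i]$. Denote this common value by
$$c:=\CL_{Z}(\BBone_{[i]})(x),$$
which is finite as soon as $Z>Z_{c}$. Then $\BBone_{[i]}$ is a strictly positive eigenfunction of $\CL_{Z}$ with eigenvalue $c$, and the goal is to show $c=\lambda_{Z,[i]}$.

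First I would prove the upper bound $\lambda_{Z,[i]}\le c$. For any $\psi\in \CC^{0}([i])$ one has
$$-\|\psi\|_{\8}\BBone_{[i]}\le \psi\le \|\psi\|_{\8}\BBone_{[i]}.$$
The operator $\CL_{Z}$ is a positive linear operator (its kernel is a sum of non-negative exponential weights), so applying $\CL_{Z}^{n}$ and iterating the eigenvalue relation $\CL_{Z}^{n}(\BBone_{[i]})=c^{n}\BBone_{[i]}$ yields
$$|\CL_{Z}^{n}(\psi)(x)|\le \|\psi\|_{\8}\,\CL_{Z}^{n}(\BBone_{[i]})(x)=c^{n}\|\psi\|_{\8}.$$
Thus $\|\CL_{Z}^{n}(\psi)\|_{\8}\le c^{n}\|\psi\|_{\8}$, and Gelfand's formula gives $\lambda_{Z,[i]}\le c$.

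For the lower bound $\lambda_{Z,[i]}\ge c$, I simply note that
$$\|\CL_{Z}^{n}(\BBone_{[i]})\|_{\8}^{1/n}=c,$$
so the same Gelfand formula (applied to the positive function $\BBone_{[i]}$, whose sup-norm is $1$) forces $\lambda_{Z,[i]}\ge c$. Combining the two inequalities gives the claimed equality for every $x\in[i]$.

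There is no genuine obstacle here; the only point to be careful about is that $\CL_{Z}$ preserves positivity (immediate from its explicit definition as a sum of exponentials times $\psi$) and that the spectral radius used elsewhere in the paper is indeed the one controlled by $\lim_{n}\|\CL_{Z}^{n}\|^{1/n}$ on the relevant Banach space of continuous (or H\"older) functions on $[i]$, which is standard in the thermodynamic formalism set-up recalled at the beginning of Appendix~\ref{sec-locthermo}.
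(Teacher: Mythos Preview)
Your proof is correct and follows essentially the same approach as the paper: both use positivity of $\CL_{Z}$ to bound $\|\CL_{Z}^{n}(\psi)\|_{\8}$ by $\|\psi\|_{\8}\|\CL_{Z}^{n}(\BBone_{[i]})\|_{\8}$, and then use that $\CL_{Z}^{n}(\BBone_{[i]})=c^{n}$ (since $\BBone_{[i]}$ is an eigenfunction under (C2)) together with Gelfand's formula to identify the spectral radius with $c$. The only cosmetic difference is that you organize the argument as separate upper and lower bounds on $\lambda_{Z,[i]}$, whereas the paper first reduces $\lambda_{Z,[i]}$ to $\limsup_{n}\frac1n\log\|\CL_{Z}^{n}(\BBone_{[i]})\|_{\8}$ and then computes it.
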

\begin{proof}
By definition of (C2)  $\CL_{Z}(\BBone_{[Z})(x)$ is a constant function in $x$.

Now, $\disp\log\lambda_{Z}=\limsup_\ninf\frac1n\log|||\CL_{Z}^n|||$, where
$$|||\CL_{Z}^n|||=\sup_{\psi\neq 0\in\CC^0}\frac{||\CL_{Z}^n(\psi)||_\8}{||\psi||_\8}.$$
Clearly, $\disp |||\CL_{Z}^n|||\ge ||\CL_{Z}^n(\BBone_{[Z]})||_\8$, but as it is a positive operator, for
every $\psi$,
$$||\CL_{Z}^n(\psi)||_\8\le ||\psi||_\8||\CL_{Z}^n(\BBone_{[Z]})||_\8.$$
Therefore, $\disp\log\lambda_{Z}=\limsup_\ninf\frac1n\log||\CL_{Z}^n(\BBone_{[Z]})||_\8$.

Now, $\CL_{Z}(\BBone_{[Z})$ is a constant function, thus for every $n$, 
$$\CL^{n}_{Z}(\BBone_{[Z]})=\left(\CL_{Z}(\BBone_{[Z]})(x)\right)^{n},$$ 
for any $x\in [i]$. 
\end{proof}

\bibliographystyle{plain}
\bibliography{bibliogodophase}

\newpage
Laboratoire de  Math\'ematiques de Bretagne Atlantique\\
UMR 6205\\
Universit\'e de Brest\\
6, avenue Victor Le Gorgeu\\
C.S. 93837, France \\
\texttt{Renaud.Leplaideur@univ-brest.fr}\\
\texttt{http://www.lmba-math.fr/perso/renaud.leplaideur}

\end{document}